\theoremstyle{plain}
\newtheorem{thm}{Theorem}[section]
\newtheorem{prop}[thm]{Proposition}
\newtheorem{lem}[thm]{Lemma}
\newtheorem{cor}[thm]{Corollary}
\theoremstyle{definition}
\newtheorem{defn}[thm]{Definition}
\theoremstyle{remark}
\newtheorem{rem}[thm]{Remark}
\title{On the K-stability of complete intersections in polarized manifolds}
\author{Claudio Arezzo and Alberto Della Vedova \footnote{{\bf Address}: Dipartimento di Matematica, Universit\`a degli Studi di Parma, Viale G. P. Usberti, 53/A - 43100 Parma (Italy). {\bf E-mail}: claudio.arezzo@unipr.it, alberto.dellavedova@unipr.it} }
\date{}
\begin{document}
\maketitle

\begin{abstract} \noindent We consider the problem of existence of constant scalar curvature K\"ahler metrics on complete intersections of sections of vector bundles. In particular we give general formulas relating the Futaki invariant of such a manifold to the weight of sections defining it and to the Futaki invariant of the ambient manifold. As applications we give a new Mukai-Umemura-Tian like example of Fano 5-fold admitting no K\"ahler-Einstein metric and a strong evidence of $K$-stability of complete intersections on Grassmannians.

\medskip 

\noindent {\bf Keywords.} Futaki invariant, complete intersection, $K$-stability, constant scalar curvature K\"ahler metric, K\"ahler-Einstein metric, Fano manifold. \end{abstract}

\section{Introduction}

\noindent The problem of determining which manifolds admit a K\"ahler constant scalar curvature metric (Kcsc), and in which K\"ahler classes, is by now a central one in differential geometry 
and it has been approached with a variety of geometric and analytical methods.

\noindent A classical result due to Matsushima and Lichnerovicz \cite{ma,li} shows that a 
such a manifolds has a reductive identity component of the automorphisms group, a condition unsensitive of the K\"ahler class where we look for the Kcsc metric.
In the eighties Futaki \cite{fu}, later generalized by Calabi \cite{ca}, introduced an invariant, since then called {\em{the Futaki invariant}}, sensitive of the K\"ahler class.
The deep nature of this invariant has stimulated a great amount of research. While it can be used directly to show that a manifold $M$ does not have a Kcsc metric in a K\"ahler class, a more refined analysis, mainly due to Ding-Tian \cite{DT92}, Tian \cite{T97}, Paul-Tian \cite{PT} and Donaldson \cite{D02}, has led to relate this invariant on a manifold $M$ to the existence 
of Kcsc metric on {\em{any}} manifold degenerating in a  suitable sense to $M$.

\noindent This idea has been formalized in a precise conjecture due to Donaldson \cite{D02} relating the existence of such metrics to the $K$-stability of the polarized manifold.
We will summarize this in Section \ref{sec::pre}. The key point relevant for our paper is that the knowledge of the Futaki invariant gives informations on the existence of Kcsc metrics on the manifolds on which the calculations are carried on {\em{and also}} on any K\"ahler
manifold degenerating on it.

\noindent The problem of calculating explicitely the Futaki invariant of a polarized manifold has then got further importance. Its original analytical definition is extremely hard to use, since requires an explicit knowledge of the Ricci potential and of the K\"ahler metric, data  which are almost always missing. On the other hand it led to the discovery of the so called {\em{localization formulae}} \cite{fu2, T1} which have been a very useful tool
in this problem. Yet, they require an explicit knowledge of the space of holomorphic vector fields and of the K\"ahler metric which is again very hard to have.

\noindent Finally Donaldson \cite{D02} gave a pure cohomological interpretation of the Futaki invariant, extending it to singular varieties and schemes, which is the one we use in this paper and that will be recalled in Section \ref{sec::pre}. Let us just recall at this point that the Futaki invariant is defined for a polarized 
scheme $(M,L)$ endowed with a 
$\mathbb C^\times$-action $\rho \colon \mathbb C^\times \rightarrow {\rm Aut}(M)$ that linearizes on $L$ (hence a holomorphic vector field $\eta_{\rho}$). We will then denote thorought 
this paper such a structure by $(M,L,\rho)$ and by $F(M,L,\rho)$ the Futaki invariant 
of $\eta_{\rho}$ in the class $c_1(L)$ of this triple.

\noindent We can now describe our result. We assume that we are given a polarized variety $(M,L)$ endowed with a $\mathbb C^\times$-action that linearizes on $L$. If $X\subset M$ is an invariant complete intersection of sections of holomorphic vector bundles $E_1,\dots,E_s$ on $M$, we will show that is possible to express $F(X,L_{|{X}},\rho)$ in terms of the weights of sections defining $X$ and holomorphic invariants of the bundles $E_j$'s and $L$.

\noindent  In this paper we make explicit the formula in two relevant cases: the first, when $L$ is the anti-canonical bundle $K_M^{-1}$ of $M$ and all $E_j$'s are isomorphic to a fixed vector bundle $E$ such that $\det E$ is a (rational) multiple of $L$ as linearized vector bundle; the second, when each $E_j$ is isomorphic to some power $L^{r_j}$ of the polarizing line bundle. We do not state the formula for the general case, but it can be recovered through some calculations from lemmata \ref{lem::asym_h^0_vect} and \ref{lem::asym_w^0_vect}.

\noindent  Let us consider the first case. Let $E$ be a $\mathbb C^\times$-linearized holomorphic vector bundle on a smooth Fano manifold $M$ such that $(\det E)^q = K_M^{-p}$  for some integers $p,q$. For each $j\in \{ 1,\dots,s \}$ let $\sigma_j \in H^0(M,E)$ be a non-zero holomorphic semi-invariant section, in other words there exists $\alpha_j \in \mathbb Z$ such that $\rho(t) \cdot \sigma_j = t^{\alpha_j}\sigma_j$. Thus the zero locus $X_j=\sigma_j^{-1}(0)$ is $\rho$ - invariant and $L=\det E$ restrict to a linearized ample line bundle on $X_j$. Consider the intersection $X = \bigcap_{j=1}^s X_j$ and assume that $\dim (X) = n- s k$, being $k = {\rm rank}(E)$.  Moreover, by adjunction, $X$ is a possibly singular Fano variety if $q-ps>0$. Our first result is the following 

\begin{thm}\label{thm::main_vect_intro}
Under the above conventions and assumptions we have
\begin{equation}\label{eq::F_E_j_intro}
F(X,L|_X, \rho) = \frac{ps-q}{2q} \frac{a_0(X,L|_X)}{d_0(X,L|_X)} - \frac{k}{2} \sum_{j=1}^s \alpha_j,
\end{equation}
where $d_0(X,L|_X)$ and $a_0(X,L|_X)$ are respectively the degree of $(X,L|_X)$ and its equivariant analogue (see definition \ref{defn::DF_invariant}) and can be computed by means of holomorphic invariants of $E$ and the quantity $\sum_{j=1}^s \alpha_j$. 
\end{thm}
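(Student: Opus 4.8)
The plan is to reduce everything to Donaldson's cohomological definition of the Futaki invariant recalled in Definition \ref{defn::DF_invariant}, and then to compute the asymptotic data it requires directly from the geometry of the complete intersection. Writing $N=\dim X=n-sk$, I would first record that for $m\gg 0$ the Hilbert function $\dim H^0(X,L^m|_X)$ agrees with $\chi(X,L^m|_X)$ and expands as $d_0 m^N+d_1 m^{N-1}+\cdots$, while the total weight $w(m)$ of the $\mathbb C^\times$-action on $H^0(X,L^m|_X)$ expands as $a_0 m^{N+1}+a_1 m^N+\cdots$. The invariant $F(X,L|_X,\rho)$ is then the prescribed combination of $d_0,d_1,a_0,a_1$, so the whole theorem becomes the explicit determination of these coefficients together with the two structural relations forced by the hypothesis $(\det E)^q=K_M^{-p}$.

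The device for transferring the computation from $X$ back to the ambient $M$ is the Koszul resolution of $\mathcal O_X$ associated to the regular section $(\sigma_1,\dots,\sigma_s)$ of $E^{\oplus s}$. Using it and the ampleness vanishing on $M$, for $m\gg 0$ one has
\begin{equation}
\dim H^0(X,L^m|_X)=\sum_{p=0}^{sk}(-1)^p\,\chi\bigl(M,\,L^m\otimes\textstyle\bigwedge^p(E^{\oplus s})^*\bigr),
\end{equation}
and $\bigwedge^p(E^{\oplus s})^*=\bigoplus_{p_1+\cdots+p_s=p}\bigwedge^{p_1}E^*\otimes\cdots\otimes\bigwedge^{p_s}E^*$ reduces each term to a Riemann--Roch computation on $M$ for the fixed bundle $E$. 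The asymptotics of the summands $\dim H^0(M,L^m\otimes\bigwedge^{p_1}E^*\otimes\cdots\otimes\bigwedge^{p_s}E^*)$ are exactly what Lemma \ref{lem::asym_h^0_vect} supplies, yielding $d_0$ and $d_1$. For the weight polynomial the same Koszul complex is taken equivariantly: contraction against $\sigma_j$ carries weight $\alpha_j$, so the summand indexed by $(p_1,\dots,p_s)$ is twisted by the extra weight $-\sum_j p_j\alpha_j$. Applying Lemma \ref{lem::asym_w^0_vect} to the weighted analogues then produces $a_0$ and $a_1$, with the section weights $\alpha_j$ entering $a_1$ linearly.

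It remains to simplify. Adjunction gives $K_X=(K_M+s\det E)|_X$, and combined with $(\det E)^q=K_M^{-p}$ and $L=K_M^{-1}$ this yields the rational identity $-K_X=\tfrac{q-sp}{q}\,L|_X$; feeding it through Riemann--Roch gives the first relation $d_1/d_0=\tfrac{N(q-sp)}{2q}$. The equivariant refinement of the same adjunction, tracking the weight of the $\mathbb C^\times$-action on the determinant pieces cut out by each $\sigma_j$ (where the rank $k$ of $E$ appears as the multiplicity with which $\det E=\bigwedge^k E$ is used), expresses $a_1/d_0$ in terms of $a_0/d_0$ and $\sum_j\alpha_j$. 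Substituting both relations into Definition \ref{defn::DF_invariant} should collapse all Chern-class data of $E$, leaving precisely the coefficient $\tfrac{ps-q}{2q}$ in front of $a_0/d_0$ and the correction term $-\tfrac{k}{2}\sum_{j=1}^s\alpha_j$.

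The main obstacle I anticipate is neither the Koszul setup nor the Riemann--Roch asymptotics, which are routine once Lemmas \ref{lem::asym_h^0_vect} and \ref{lem::asym_w^0_vect} are in hand, but the final equivariant bookkeeping in the last step: one must keep the sign and magnitude of every weight shift $-\sum_j p_j\alpha_j$ consistent across the alternating Koszul sum, and then verify that, after imposing $(\det E)^q=K_M^{-p}$, the contribution of the bundle Chern classes to $a_1/d_0$ is exactly the multiple of $a_0/d_0$ needed to combine with $d_1/d_0$ into the single coefficient $\tfrac{ps-q}{2q}$, so that the residual weight term is cleanly $-\tfrac{k}{2}\sum_j\alpha_j$. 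Getting this cancellation to hold identically, rather than only to leading order, is where the care will be required.
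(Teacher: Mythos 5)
Your proposal is correct and follows essentially the same route as the paper's proof of Theorem \ref{thm::main_vect}: the Koszul resolution of $\mathcal O_X$ for the regular section of $E^{\oplus s}$, the asymptotics of Lemmas \ref{lem::asym_h^0_vect} and \ref{lem::asym_w^0_vect}, the relation $d_1(X)=\tfrac{(q-ps)(n-sk)}{2q}d_0(X)$ forced by $(\det E)^q=K_M^{-p}$ (which the paper derives directly from the Chern-class identity $q\,c_1(E)=p\,c_1(M)$ rather than phrasing it as adjunction, but it is the same computation), its equivariant analogue $a_1(X)=\tfrac{(q-ps)(n-sk+1)}{2q}a_0(X)+\tfrac{k}{2}d_0(X)\sum_j\alpha_j$, and the final substitution into Definition \ref{defn::DF_invariant}. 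The only detail you leave implicit that the paper makes explicit is the normalization step: one uses independence of $F$ from the linearization of $L$ to pass to the anti-canonical linearization with $c_1^G(L)=c_1^G(M)$, adjusting the linearization of $E$ so that $q\,c_1^G(E)=p\,c_1^G(M)$, which is what makes the equivariant bookkeeping you worry about close up cleanly.
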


\noindent The above theorem gives a significant simplification of the Donaldson version of the Futaki invariant (definition \ref{defn::DF_invariant}) in that the above formula involves only $a_0$ and $d_0$ and not $a_1$ and $d_1$ which are in general much harder to compute.

\noindent It is also important to notice that $\sum_{j=1}^s \alpha_j$ is nothing but the Mumford weight of the plane $P={\rm span}\{\sigma_j\} \in Gr(s, H^0(M,E))$. With an additional hypothesis on the linearization of the given $\mathbb C^\times$--action on $E$, theorem above gives the following 

\begin{cor}
Under the above conventions and assumptions, if the $\mathbb C^\times$-linearization on $E$ satisfies $ \int_M c_k^G(E)^sc_1^G(E)^{n-sk+1} = 0$, then
\begin{equation}
F(X,K_X^{-1},\rho) = -CT \sum_{j=1}^s \alpha_j,
\end{equation}
where $$C=\left(2p(n-sk+1)\int_M c_k(E)^sc_1(E)^{n-sk}\right)^{-1}>0 $$ and 
\begin{multline*} T = kp(n-sk+1)\int_M c_k(E)^sc_1(E)^{n-sk} \\ - (q-ps) \int_M c_k(E)^{s-1}c_{k-1}(E)c_1(E)^{n-sk+1} \end{multline*}
are characteristic numbers of $E$ (independent of the $\mathbb C^\times$--linearization). 
\end{cor}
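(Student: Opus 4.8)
The plan is to specialize Theorem \ref{thm::main_vect_intro} to the anticanonical polarization and to evaluate the ratio $a_0/d_0$ explicitly under the stated vanishing hypothesis. First I would record the effect of adjunction at the level of \emph{linearized} bundles. Since $X$ is cut out by the semi-invariant sections $\sigma_1,\dots,\sigma_s$ of weights $\alpha_1,\dots,\alpha_s$, its equivariant fundamental class is $[X]^G=\prod_{j=1}^s c_k^G(E\otimes\mathbb{C}_{-\alpha_j})$, and the normal bundle $N_{X/M}\cong\bigoplus_j (E\otimes\mathbb{C}_{-\alpha_j})|_X$ carries the twist coming from the $\alpha_j$'s. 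Combined with the identity $(\det E)^q=K_M^{-p}$ of linearized bundles, adjunction yields $K_X^{-1}=(\det E)^{(q-ps)/p}|_X\otimes\mathbb{C}_{k\sum_j\alpha_j}$ as a linearized line bundle, so that $c_1^G(K_X^{-1})=\tfrac{q-ps}{p}\,c_1^G(E)|_X+k\big(\sum_j\alpha_j\big)\theta$, where $\theta$ denotes the generator of the equivariant parameter. Keeping track of the two distinct sources of $\theta$ — the twist in $[X]^G$ and the twist in $c_1^G(K_X^{-1})$ — is the crucial bookkeeping step.

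Next I would compute $d_0(X,K_X^{-1})$ and $a_0(X,K_X^{-1})$ by means of Lemmata \ref{lem::asym_h^0_vect} and \ref{lem::asym_w^0_vect}. The degree is the purely topological number obtained by pushing forward $c_1(K_X^{-1})^{n-sk}$ along $[X]=c_k(E)^s\cap[M]$, namely a multiple of $\int_M c_k(E)^s c_1(E)^{n-sk}$. For $a_0$ I would expand the equivariant integral $\int_M [X]^G\,\big(c_1^G(K_X^{-1})\big)^{n-sk+1}$ and extract the coefficient of $\theta$. Three families of terms arise: the one proportional to $\int_M c_k^G(E)^s c_1^G(E)^{n-sk+1}$, which vanishes by hypothesis; the term produced by the linear-in-$\theta$ part of $c_1^G(K_X^{-1})$, proportional to $k\big(\sum_j\alpha_j\big)\int_M c_k(E)^s c_1(E)^{n-sk}$; and the term produced by the linear-in-$\theta$ part of $[X]^G$, proportional to $\big(\sum_j\alpha_j\big)\int_M c_k(E)^{s-1}c_{k-1}(E)c_1(E)^{n-sk+1}$.

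It is precisely the hypothesis $\int_M c_k^G(E)^s c_1^G(E)^{n-sk+1}=0$ that removes the only linearization-dependent contribution, leaving $a_0/d_0$ equal to $\sum_j\alpha_j$ times a ratio of ordinary characteristic numbers of $E$ (the powers of $(q-ps)/p$ cancelling between $a_0$ and $d_0$). Substituting these expressions into the Donaldson--Futaki invariant of Definition \ref{defn::DF_invariant}, specialized to the Fano polarization $K_X^{-1}$ and reduced exactly as in the proof of Theorem \ref{thm::main_vect_intro} so that only $a_0$ and $d_0$ survive, and collecting the common factor $\sum_j\alpha_j$, I would read off $F(X,K_X^{-1},\rho)=-CT\sum_j\alpha_j$ with $C$ and $T$ as stated. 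Positivity of $C$ is then immediate, since $\int_M c_k(E)^s c_1(E)^{n-sk}=(n-sk)!\,d_0(X,L|_X)>0$ is a positive degree and $p,(n-sk+1)>0$.

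The main obstacle I anticipate is the equivariant bookkeeping in the computation of $a_0$: one must correctly isolate the coefficient of $\theta$ among the several contributions, verify that the hypothesis annihilates exactly the leading, $E$-intrinsic piece and nothing else, and check that the surviving combination reassembles into the linearization-independent number $T$. A secondary point requiring care is the compatibility of the adjunction isomorphism with the $\mathbb{C}^\times$-linearization — that the normal bundle twist contributes the weight $k\sum_j\alpha_j$ with the correct sign — since this is what fixes the overall sign of the final formula.
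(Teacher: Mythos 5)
Your proposal is correct and ultimately performs the same computation as the paper: the paper's proof of Corollary \ref{cor::Futaki_peso} is a one-line substitution of the explicit expressions for $a_0(X,L|_X)$ and $d_0(X,L|_X)$ from Theorem \ref{thm::main_vect} into \eqref{eq::F_E_j}, which yields $F(X,L|_X,\rho)=-C\big((q-ps)\int_M c_k^G(E)^sc_1^G(E)^{n-sk+1}+T\sum_j\alpha_j\big)$, after which the hypothesis kills the first term and positivity of $C$ follows because $1/C$ is a positive multiple of the degree of $(X,L|_X)$. The only substantive difference in your route is the normalization: you renormalize the polarization to $K_X^{-1}$ via equivariant adjunction, so the weight contribution $k\sum_j\alpha_j$ migrates from the separate $-\tfrac{k}{2}\sum_j\alpha_j$ term of \eqref{eq::F_E_j_intro} into a third, $\theta$-linear term of $a_0(X,K_X^{-1})$; in the paper's normalization $a_0(X,L|_X)$ has only two terms and that contribution sits in $a_1$. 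This buys you the cleaner identity $F=-\tfrac12\,a_0(X,K_X^{-1})/d_0(X,K_X^{-1})$, but it costs you an extra verification the paper does not need: Remark \ref{rem::aclb} is stated for smooth $V$ with the natural anticanonical linearization, whereas $X$ may be singular and your linearization on $K_X^{-1}$ is the adjunction one, so you must rerun the reduction of Theorem \ref{thm::main_vect} with the twisted Koszul data to check that $a_1=\tfrac{n-sk+1}{2}a_0$ and $d_1=\tfrac{n-sk}{2}d_0$ still hold — they do, precisely because the twist $\mathbb{C}_{k\sum_j\alpha_j}$ absorbs the extra $\tfrac{k}{2}d_0\sum_j\alpha_j$ term. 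One small slip: $\int_M c_k(E)^sc_1(E)^{n-sk}$ equals $(p/q)^{n-sk}(n-sk)!\,d_0(X,L|_X)$, not $(n-sk)!\,d_0(X,L|_X)$; it is still a positive multiple of the degree (given the sign convention making $\det E$ a positive rational power of $L$), which is all that the positivity of $C$ requires.
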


\noindent The interest in the above Corollary is twofold. On the one hand it relates two very natural, and a priori unrelated, invariants of the manifold $X$ in a completely general setting. On the other hand it generalizes a special case, proved by completely different ad hoc arguments
by Tian \cite{T97}, used to produce the first (and up to now the only) examples of smooth Fano manifolds with discrete automorphism group without  K\"ahler-Einstein metrics.

\noindent  Another application of our study is that if $(M,L)$ is a complex Grassmannian anticanonically polarized and $P$ is a {\em{generic}} subspace of $H^0(M,E)$, in a sense explained in Section \ref{sec::appex}, then $X_P$ degenerates onto a $X_{P_{0}}$ whose Futaki invariant is positive, hence hinting at the $K$-stability of this type of manifolds. In particular this gives strong evidence to $K$-stability of these manifolds if their moduli space is discrete.

\noindent Of course the above Corollary rises the question whether $T$ has a specific sign.
We do not believe in general this to be the case, but we describe some classes of examples for which we can conclude, thanks to a theorem of of Beltrametti, Schneider and Sommese \cite{BSS96}, that $T$ is indeed positive (see also Remark \ref{rem::positivity_T}).

\noindent  Our second type of results comes from looking at classes different form the canonical one. We will restrict ourselves to the case when the bundles where to choose the sections are all line bundles and are all (possibly varying) powers of a fixed line bunlde $L$. Thus if $L$ is sufficiently positive we can embed $M$ in a projective space $\mathbb P^N$ and $X$ is the intersection of $M$ with a number of hypersurfaces. We are then interpreting our results in terms of Kcsc metrics in $c_1(L)$. This situation has been previously studied by Lu \cite{Lu} in the case when the ambient manifold is projective space. Again our result has a computational interest in that it makes 
very easy to calculate the Futaki invariant for  a great variety of manifolds, but also a 
conceptual one that we underline in the following

\begin{cor}
Let $(M,L)$ be a $n$-dimensional polarized manifold endowed with a $\mathbb C^\times$-action $\rho: \mathbb C^\times \to {\rm Aut}(M)$ and a linearization on $L$. For each $j\in\{1,\dots,s\}$ consider a section $\sigma_j \in H^0(M,L^{r})$ such that $\rho(t) \cdot \sigma_j = t^{\alpha_j} \sigma_j$ for some $\alpha_j \in \mathbb Z$. Let $X=\bigcap_{j=1}^s \sigma^{-1}(0)$. Suppose $\dim(X) = n-s$, then 

 \begin{eqnarray*} F(X,L|_X,\rho) &=&  F(M,L,\rho) - C \mu \left(X,M,L\right), \end{eqnarray*} where
 $\mu \left(X,M,L\right),$ is the Chow weight of the polarized manifold (see Section \ref{sec::Lrj} for the definition), and  $C \geq 0$ with equality if and only if $M\simeq \mathbb P^n$ and $r=1$.
 
\noindent In particular, if $M$ has a Kcsc in $c_1(L)$ and $X$ is $K$-semistable, then $(X,L_{|_{X}})$ is Chow stable.
\end{cor}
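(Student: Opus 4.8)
The plan is to reduce everything to an equivariant count of sections through the Koszul resolution of $\mathcal O_X$ and then to extract the four coefficients entering definition \ref{defn::DF_invariant}. First I would record that $X$ is cut out by the section $(\sigma_1,\dots,\sigma_s)$ of $\bigoplus_{j=1}^s L^r$; since $\sigma_j$ is semi-invariant of weight $\alpha_j$, the correct equivariant bundle is $E=\bigoplus_{j=1}^s\big(L^r\otimes\mathbb C_{-\alpha_j}\big)$, so that $\bigwedge^i E^\vee=\bigoplus_{|J|=i}L^{-ir}\otimes\mathbb C_{\sum_{j\in J}\alpha_j}$. Tensoring the Koszul resolution by $L^m$ and invoking Serre vanishing for $m\gg 0$, the equivariant Euler characteristic reduces to $H^0$ and gives
\[
\begin{aligned}
h^0(X,L^m|_X)&=\sum_{i=0}^s(-1)^i\binom{s}{i}\,h^0(M,L^{m-ir}),\\
w(X,L^m|_X)&=\sum_{i=0}^s(-1)^i\binom{s}{i}\,w(M,L^{m-ir})-A\sum_{i=0}^{s-1}(-1)^i\binom{s-1}{i}\,h^0(M,L^{m-(i+1)r}),
\end{aligned}
\]
where $A=\sum_{j=1}^s\alpha_j$ and I have used $\sum_{|J|=i}\sum_{j\in J}\alpha_j=\binom{s-1}{i-1}A$. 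These two identities are precisely what lemmata \ref{lem::asym_h^0_vect} and \ref{lem::asym_w^0_vect} provide in this special case.

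Next I would run the finite-difference asymptotics: the alternating sums are backward differences of step $r$ and orders $s$ and $s-1$, so they drop the degree by $s$ and $s-1$ respectively, and the coefficients $d_0(X,L|_X),d_1(X,L|_X),a_0(X,L|_X),a_1(X,L|_X)$ come out as explicit linear combinations of the ambient coefficients and of $A$. Substituting into definition \ref{defn::DF_invariant} and simplifying, the outcome factors: for a single section ($s=1$, weight $\alpha$) one finds
\[
F(X,L|_X,\rho)=F(M,L,\rho)-C\Big(r\,\tfrac{a_0(M,L)}{d_0(M,L)}-\alpha\Big),\qquad C=\tfrac{n+1}{2n}-\tfrac{d_1(M,L)}{n^2r\,d_0(M,L)},
\]
and the bracket is, up to the normalization of Section \ref{sec::Lrj}, the Chow weight $\mu(X,M,L)$ (for general $s$ it reads $sr\,a_0(M,L)/d_0(M,L)-A$), measuring the deviation of the actual weight $A$ from the expected weight $sr\,a_0(M,L)/d_0(M,L)$. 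The general $s$ is identical in structure; the point is that the purely ambient part reassembles into $F(M,L,\rho)$ while the remaining term is proportional to the Chow weight.

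It remains to show $C\ge 0$ with equality exactly at $(\mathbb P^n,\mathcal O(1))$ and $r=1$; this is the technical heart. By Riemann-Roch $d_1(M,L)/d_0(M,L)=\tfrac n2\,(-K_M\cdot L^{n-1})/(L^n)$, so $C=\tfrac{1}{2n}\big((n+1)-r^{-1}(-K_M\cdot L^{n-1})/(L^n)\big)$, and since $r\ge 1$ the claim reduces to the slope inequality $(-K_M\cdot L^{n-1})/(L^n)\le n+1$, with equality only for projective space — a rigidity of the same type as the positivity result of Beltrametti-Schneider-Sommese \cite{BSS96} used elsewhere in this paper.

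Finally, a constant scalar curvature metric in $c_1(L)$ forces the classical Futaki invariant to vanish on every holomorphic vector field, in particular $F(M,L,\rho)=0$; hence $F(X,L|_X,\rho)=-C\,\mu(X,M,L)$. Away from the exceptional case $C>0$, so the $K$-semistability sign constraint on $F(X,L|_X,\rho)$ is, after matching the conventions of definition \ref{defn::DF_invariant} and Section \ref{sec::Lrj}, exactly the defining inequality for Chow stability of $(X,L|_X)$, which gives the conclusion. I expect the only genuine obstacle to be the equality case of the slope inequality, where one must invoke the characterization of $\mathbb P^n$.
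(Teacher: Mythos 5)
Your overall strategy --- twist the Koszul resolution of $\mathcal O_X$ so that the sections become invariant, extract the four coefficients of Definition \ref{defn::DF_invariant} by finite-difference asymptotics of $h^0(M,L^{m-ir})$ and $w(M,L^{m-ir})$, and reduce the positivity of $C$ to the slope inequality $(-K_M\cdot L^{n-1})/(L^n)\le n+1$ with equality only for $(\mathbb P^n,\mathcal O(1))$ --- is essentially the paper's (which routes through Theorem \ref{thm::main_lin} and Lemmata \ref{lem::asym_h^0_vect}, \ref{lem::asym_w^0_vect}, and gets the positivity of $C$ from Kobayashi--Ochiai \cite{KO73}, not from a Beltrametti--Schneider--Sommese type result). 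The genuine problem is a sign error at the decisive point. Running your own $s=1$ computation from $w(X,L^m|_X)=w(M,L^m)-w(M,L^{m-r})-\alpha\,h^0(M,L^{m-r})$ and $h^0(X,L^m|_X)=h^0(M,L^m)-h^0(M,L^{m-r})$ gives
\begin{equation*}
F(X,L|_X,\rho)-F(M,L,\rho)=-\frac{1}{n}\left(\frac{\alpha}{r}-\frac{a_0}{d_0}\right)\left(\frac{(n+1)r}{2}-\frac{d_1}{nd_0}\right)=+\left(\frac{n+1}{2n}-\frac{d_1}{n^2r\,d_0}\right)\left(r\frac{a_0}{d_0}-\alpha\right),
\end{equation*}
i.e.\ the \emph{opposite} sign to the formula you state. (Sanity check against Lu's corollary: for $M=\mathbb P^n$, $L=\mathcal O(1)$, a degree-$r$ hypersurface of weight $\alpha$ has $F=\frac{(n+1)(1-r)}{2nr}\alpha$, whereas your formula yields $+\frac{(n+1)(r-1)}{2nr}\alpha$.) Compounding this, your bracket $sr\,a_0/d_0-A$ equals $-r(n+1-s)\,\mu(X,M,L)$ with the paper's Definition \ref{defn::Cw}, a \emph{negative} multiple of the Chow weight, so the phrase ``up to the normalization of Section \ref{sec::Lrj}'' conceals a second sign flip rather than a harmless rescaling. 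Since the entire content of the corollary, and of the implication from $K$-semistability to Chow stability, is exactly the sign of the coefficient in $F(X,L|_X,\rho)=F(M,L,\rho)-C\,\mu(X,M,L)$ with $C\ge 0$, these are not cosmetic slips: as written, your argument would conclude from $F(M,L,\rho)=0$ and $F(X,L|_X,\rho)\ge 0$ that $\mu\ge 0$, i.e.\ Chow \emph{instability}. You need to redo the bookkeeping in the ``substituting and simplifying'' step and verify that the ambient terms reassemble into $F(M,L,\rho)$ with remainder $-C\mu$, $C=r\frac{n+1}{2}-\frac{d_1(M,L)}{n\,d_0(M,L)}$; the rest of your outline (Koszul setup, the identity $\sum_{|J|=i}\sum_{j\in J}\alpha_j=\binom{s-1}{i-1}A$, and the appeal to the slope inequality) is sound.
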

 
\noindent The relevance of this last statement is that the conclusion is {\em{not about asymptotic Chow stability}}, which is known to be related by a result of Donaldson \cite{D00} to the existence of Kcsc metrics. For example, even in the very special case of hypersurfaces of projective spaces, this gives strong further evidence of their $K$-semistability (cfr. Tian \cite{T94}).

\noindent Having dropped the assumption on the smoothness of $X$ we can use our formulae for singular varieties which arise as central fiber of test configurations. We give in Section \ref{sec::appex} an explicit example of this situation with a central fiber of our type with non positive Futaki invariant, hence producing non Kcsc manifolds (the degenerating ones).

\noindent Another explicit application of our formulae comes when looking at the quintic Del Pezzo threefold, $X_5$, for which it was not known whether it admits a Kcsc metric. In fact our analysis shows that it is $K$-stable, when confining to those test configurations whose central fibers are still 
manifolds of the type considered in our paper. While we believe a complete algebraic proof of its $K$-stability is then at hand, showing that every test configuration is indeed of this type, 
we remark that we can adapt a very recent 
observation of Donaldson \cite{D07} about the Mukai-Umemura threefold, to prove that this 
manifold (which is rigid in moduli) indeed  has a K\"ahler-Einstein metric.

\noindent Unfortunately the other Fano threefolds with ${\rm Pic} = \mathbb Z$ for which the existence of a canonical metric is unknown, when smooth do not have continuous automorphisms. If we take singular ones defined by sections of the appropriate bundles
with non positive Futaki invariant, we still cannot find test configurations with smooth general fibers. We leave this important problem for further research.

\medskip

\noindent Part of this work has been carried out in Fall 2007 during the visit of the second author at the Princeton University, whose hospitality is gratefully acknowledged. It is a great pleasure to thank G. Tian for many enlightening discussions. Thanks also to Y. Rubinstein and J. Stoppa  for many important conversations. 
 

\section{Preliminaries}\label{sec::pre}

At this point we recall some definitions (mainly form \cite{D02}) for future reference.

\begin{defn}\label{defn::DF_invariant}
Let $(V,L)$ be a $n$-dimensional polarized variety or scheme. Given a one parameter subgroup $\rho: \mathbb C^\times \to {\rm Aut}(V)$ with a linearization on $L$ and denoted by $w(V,L)$ the weight of the $\mathbb C^\times$-action induced on $\bigwedge^{\rm top} H^0(V,L)$, we have the following asymptotic expansions as $k \gg 0$:
\begin{eqnarray}
\label{eq::asym_h^0(V,L^k)} w(V,L^m) &=& a_0(V,L) m^{n+1} + a_1(V,L) m^{n} + O(m^{n-1}) \\
\label{eq::asym_w(V,L^k)}h^0(V,L^m) &=& d_0(V,L) m^n + d_1(V,L) m^{n-1} + O(m^{n-2})
\end{eqnarray}
The (normalized) \emph {Futaki invariant} of the action is $$ F(V,L, \rho) = \frac{a_0(V,L) \,d_1(V,L)}{d_0(V,L)^2} - \frac{a_1(V,L)}{d_0(V,L)}. $$
\end{defn}

\begin{rem}\label{rem::aclb}
Is not difficult to see that the Futaki invariant is unchanged if we replace $L$ with some tensor power $L^r$, moreover it is independent of the linearization chosen on $L$. Unlike the general case, when $V$ is smooth and $L=K_V^{-1}$ is the canonical bundle there is a natural linearization of the $\mathbb C^\times$-action $\rho$ on $L$ induced by the (holomorphic) tangent map $$d\rho: TM \to TM.$$ In this case we will call $L$ the \emph{anti-canonical linearized bundle}. 

We observe that the Futaki invariant of a polarized manifold $(V,L)$ assume a simple form when $L$ is the anti-canonical linearized line bundle. Indeed, by the equivariant Riemann-Roch theorem we get $d_0(V,K_V^{-1})=\int_V\frac{c_1(V)^n}{n!}$, $d_1(V,K_V^{-1}) = \int_V \frac{c_1(V)^n}{2(n-1)!}$, $a_0(V,K_V^{-1})=\int_V\frac{c_1^G(V)^{n+1}}{(n+1)!}$, $a_1(V,K_V^{-1})=\int_V\frac{c_1^G(V)^{n+1}}{2\,n!}$ (where $c_1^G$ denote the equivariant first Chern class), whence

\begin{equation*}
F(V, K_V^{-1}, \rho) = -\frac{1}{2} \frac{a_0(V,L)}{d_0(V,L)}.
\end{equation*}
\end{rem}

The relevance of the Futaki invariant is related to the definition of $K$-stability. To introduce it we need the following 

\begin{defn}
A \emph{test configuration} of a polarized manifold $(X,L)$ consists of a polarized scheme $(\mathcal X, \mathcal L)$ endowed with a $\mathbb C^\times$-action that linearizes on $\mathcal L$ and a flat $\mathbb C^\times$-equivariant map $\pi: \mathcal X \to \mathbb C$ such that $\mathcal L|_{\pi^{-1}(0)}$ is ample on $\pi^{-1}(0)$ and we have $(\pi^{-1}(1) , \mathcal L|_{\pi^{-1}(1)}) \simeq (X,L^r)$ for some $r>0$.

When $(X,L)$ has a $\mathbb C^t\times$ action $\rho: \mathbb C^\times \to {\rm Aut}(M)$, a test configuration where $\mathcal X = X \times \mathbb C$ and $\mathbb C^\times$ acts on $\mathcal X$ diagonally trought $\rho$ is called \emph{product configuration}.    
\end{defn}

\begin{defn}
The pair $(X,L)$ is \emph{$K$-stable} if for each test configuration for $(X,L)$ the Futaki invariant of the induced action on $(\pi^{-1}(0), \mathcal L|_{\pi^{-1}(0)})$ is greater than or equal to zero, with equality if and only if we have a product configuration.
\end{defn}

Finally we remark that the apparently different definition of $K$-stability given in \cite{D02} is due to the different choice of the sign in the dfinition of the Futaki invariant. 
 

\section{The case $(\det E)^q\simeq K_M^{-p}$}

\begin{thm}\label{thm::main_vect}
Let $(M,L)$, with $L=K^{-1}_M$, be a $n$-dimensional anti-canonically polarized Fano manifold endowed with a $\mathbb C^\times$-action $\rho: \mathbb C^\times \to {\rm Aut}(M)$ and a linearization on $L$. Let $E$ be a rank $k$ linearized vector bundle on $M$ such that $(\det E)^q \simeq L^p$ as linearized bundles for some $p$. For each $j\in\{1,\dots,s\}$ consider a non-zero section $\sigma_j \in H^0(M,E)$ such that $\rho(t) \cdot \sigma_j = t^{\alpha_j} \sigma_j$ for some $\alpha_j \in \mathbb Z$ and set $X=\bigcap_{j=1}^s \sigma_j^{-1}(0)$. If $\dim(X)=n-sk$, then we have
\begin{equation}\label{eq::F_E_j}
F(X,L|_X, \rho) = \frac{ps-q}{2q} \frac{a_0(X,L|_X)}{d_0(X,L|_X)} - \frac{k}{2} \sum_{j=1}^s \alpha_j,
\end{equation}
where
\begin{eqnarray*}
a_0(X,L|_X) &=& \int_M \frac{c_k^G(E)^s c_1^G(L)^{n-sk+1}}{(n-sk+1)!} +\\
             &&- \sum_{j=1}^s \alpha_j \int_M \frac{c_k(E)^{s-1} c_{k-1}(E) c_1(M)^{n-sk+1}}{(n-sk+1)!} \\
d_0(X,L|_X) &=& \int_M \frac{c_k(E)^s c_1(M)^{n-sk}}{(n-sk)!}. 
\end{eqnarray*}
\end{thm}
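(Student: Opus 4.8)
The plan is to read off the four leading coefficients $a_0,a_1,d_0,d_1$ of the asymptotic expansions of Definition \ref{defn::DF_invariant} for the pair $(X,L|_X)$ and substitute them into $F=a_0d_1/d_0^2-a_1/d_0$. Since $L|_X$ is ample, Serre vanishing gives $h^0(X,L^m|_X)=\chi(X,L^m|_X)$ and identifies $w(X,L^m|_X)$ with the equivariant Euler characteristic for $m\gg0$, so both expansions are computable cohomologically. The device that makes $X$ tractable, even when singular, is the Koszul resolution: the semi-invariant sections $\sigma_1,\dots,\sigma_s$ assemble into a single \emph{invariant} section $\sigma=\sum_j\sigma_j\otimes e_j$ of the linearized bundle $\mathcal E:=E\otimes W$, where $W=\bigoplus_{j=1}^s\mathbb C_{(-\alpha_j)}$ is the trivial rank-$s$ bundle whose $j$-th summand carries $\mathbb C^\times$-weight $-\alpha_j$ (so that $\rho(t)\cdot\sigma=\sigma$). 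As $\dim X=n-sk$, the section $\sigma$ is regular, its Koszul complex $\wedge^\bullet\mathcal E^\ast$ resolves $\mathcal O_X$ equivariantly, and twisting by $L^m$ expresses $\chi(X,L^m|_X)$ and its equivariant analogue as alternating sums of (equivariant) Euler characteristics on the smooth ambient $M$.

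I would then apply (equivariant) Hirzebruch--Riemann--Roch on $M$. Writing $N:=n-sk$ and using $\sum_i(-1)^i\mathrm{ch}(\wedge^i\mathcal E^\ast)=c_{sk}(\mathcal E)\,\mathrm{Td}(\mathcal E)^{-1}$, the Koszul sum collapses to $\chi(X,L^m|_X)=\int_M c_{sk}(\mathcal E)\,\mathrm{Td}(\mathcal E)^{-1}e^{m c_1(L)}\mathrm{Td}(M)$, and likewise for the weight with every class replaced by its equivariant counterpart; this is the content packaged by Lemmata \ref{lem::asym_h^0_vect} and \ref{lem::asym_w^0_vect}. Extracting the top power of $m$ and using that, as an ordinary bundle, $\mathcal E=E^{\oplus s}$, whence $c_{sk}(\mathcal E)=c_k(E)^s$ (Whitney for top Chern classes), yields the stated $d_0=\int_M c_k(E)^sc_1(M)^N/N!$. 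For $a_0$ the only new feature is the equivariant twist: with $u$ the equivariant parameter, $c_{sk}^G(\mathcal E)=\prod_j c_k^G(E\otimes\mathbb C_{(-\alpha_j)})=c_k^G(E)^s-u\big(\sum_j\alpha_j\big)c_k^G(E)^{s-1}c_{k-1}^G(E)+O(u^2)$, and this is exactly the origin of the two terms in the stated formula for $a_0$.

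The heart of the simplification is to eliminate $a_1,d_1$ using the hypothesis $(\det E)^q\simeq L^p$. Non-equivariantly, adjunction gives $c_1(X)=\frac{q-ps}{q}c_1(L|_X)$, so $d_1=\frac{1}{2(N-1)!}\int_X c_1(L|_X)^{N-1}c_1(X)=\frac{N(q-ps)}{2q}d_0$. Equivariantly, $\det\mathcal E=(\det E)^s\otimes(\det W)^k$ with $c_1^G(\det W)=-\big(\sum_j\alpha_j\big)u$, so equivariant adjunction reads $c_1^G(X)=\frac{q-ps}{q}c_1^G(L|_X)+k\big(\sum_j\alpha_j\big)u$. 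Feeding this into the Remark \ref{rem::aclb}-type expressions supplied by the Koszul computation, $a_0=\frac{1}{(N+1)!}\int_X (c_1^G(L|_X))^{N+1}$ and $a_1=\frac{1}{2N!}\int_X (c_1^G(L|_X))^{N}c_1^G(X)$, and noting that the $u$-linear term integrates to $N!\,d_0$, I obtain $a_1=\frac{(N+1)(q-ps)}{2q}a_0+\frac{k}{2}\big(\sum_j\alpha_j\big)d_0$.

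Substituting these relations into $F=a_0d_1/d_0^2-a_1/d_0$, the $a_0/d_0$ contributions combine as $\frac{q-ps}{2q}\big(N-(N+1)\big)\frac{a_0}{d_0}=\frac{ps-q}{2q}\frac{a_0}{d_0}$, while the correction leaves $-\frac{k}{2}\sum_j\alpha_j$, giving exactly \eqref{eq::F_E_j}. The main obstacle is the careful bookkeeping of the $\mathbb C^\times$-linearization throughout the equivariant Koszul and Riemann--Roch computation: one must track the weights $-\alpha_j$ of $W$ simultaneously in $c_{sk}^G(\mathcal E)$ (producing the correction to $a_0$) and in the equivariant adjunction for $c_1^G(X)$ (producing the $k\big(\sum_j\alpha_j\big)u$ term, hence the final $-\frac{k}{2}\sum_j\alpha_j$), and to justify that these identities, phrased on the possibly singular $X$, are legitimate consequences of the resolution on the smooth $M$.
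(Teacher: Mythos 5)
Your proposal is correct and follows essentially the same route as the paper: the Koszul resolution of $\mathcal O_X$ after twisting $E$ by $\mathbb C_{(-\alpha_j)}$ to make $\sigma$ invariant, equivariant Riemann--Roch on $M$ (the content of Lemmata \ref{lem::asym_h^0_vect} and \ref{lem::asym_w^0_vect}), and the elimination of $a_1,d_1$ via the relation $q\,c_1^G(E)=p\,c_1^G(L)$ together with the anti-canonical linearization $c_1^G(L)=c_1^G(M)$. Your ``equivariant adjunction'' phrasing is just a compact repackaging of the paper's direct computation of $d_1(X)$ and $a_1(X)$, and the final substitution into $F=a_0d_1/d_0^2-a_1/d_0$ is identical.
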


\begin{rem} 
Clearly the linearization of $E$ is fixed from the one of $L$ thanks to the hypothesis $(\det E)^q \simeq L^p$ as linearized bundles. The latter is crucial to get the compact formula \eqref{eq::F_E_j}. Indeed $\alpha_j$ and $a_0(X,L|_X)$ depend on the linearization of $E$ and $L$ respectively, but on the other hand $F(X,L|_X, \rho)$ is independent of the linearization of $L$.
\end{rem}

\begin{proof}[proof of theorem \ref{thm::main_vect}]
Since $c_{sk}(E^{\oplus s})=c_k(E)^s$, $c_1(E^{\oplus s}) = s\, c_1(E)$ and  by hypothesis $q\,c_1(E)=c_1(L^p)=p\,c_1(M)$, by lemma \ref{lem::asym_h^0_vect} we get
\begin{eqnarray*}
d_0(X) &=& \int_M \frac{c_k(E)^s c_1(M)^{n-sk}}{(n-sk)!} \\
d_1(X) &=& (1-\frac{p}{q}s)\int_M \frac{c_k(E)^s c_1(M)^{n-sk}}{2(n-sk-1)!} = \frac{(q-ps)(n-sk)}{2q}d_0(X).
\end{eqnarray*}
Since $F(X,L|_X, \rho)$ is indipendent of the linearization on $L$, we are free to change it to make easier the calculations. In particular we choose on $L\simeq K_M^{-1}$ the natural linearization coming from the lifting of the $\mathbb C^\times$-action on the holomorphic tangent bundle $TM$. This gives $c_1^G(L)=c_1^G(M)$, wehere $c_1^G$ denote the equivariant first chern class (in the Cartan model of the equivariant cohomology of $M$). To preserve the hypothesis we have to vary accordingly the linearization of $E$ to have $q\,c^G_1(E)=c^G_1(L^p)=p\,c^G_1(M)$.       
Finally, by relations $c^G_{sk}(E^{\oplus s})=c^G_k(E)^s$, $c^G_1(E^{\oplus s}) = s\, c^G_1(E)$ and lemma \ref{lem::asym_w^0_vect} we have
\begin{eqnarray*}
a_0(X) &=& \int_M \frac{c_k^G(E)^s c_1^G(M)^{n-sk+1}}{(n-sk+1)!} - \sum_{j=1}^s \alpha_j \int_M \frac{c_k(E)^{s-1} c_{k-1}(E) c_1(M)^{n-sk+1}}{(n-sk+1)!} \\
a_1(X) &=& (1-\frac{p}{q}s)\int_M \frac{c_k^G(E)^s c_1^G(M)^{n-sk+1}}{2(n-sk)!} + \sum_{j=1}^s k\alpha_j \int_M \frac{c_k(E)^s c_1(M)^{n-sk}}{2(n-sk)!} + \\
&&- (1-\frac{p}{q}s)\sum_{j=1}^s \alpha_j \int_M \frac{c_k(E)^{s-1} c_{k-1}(E) c_1(M)^{n-sk+1}}{2(n-sk)!} \\
&=& \frac{(q-ps)(n-sk+1)}{2q} a_0(X,L_X) + \frac{k}{2} d_0(X,L_X) \sum_{j=1}^s \alpha_j.
\end{eqnarray*}
Thus, by definition \ref{defn::DF_invariant} we get
\begin{eqnarray*}
F(X,L|_X, \rho) &=& \frac{(q-ps)(n-sk)}{2q} \frac{a_0(X,L|_X)}{d_0(X,L|_X)} \\
& & - \frac{(q-ps)(n-sk+1)}{2q} \frac{a_0(X,L|_X)}{d_0(X,L|_X)} - \frac{k}{2} \sum_{j=1}^s \alpha_j \\
&=& \frac{ps-q}{2q} \frac{a_0(X,L|_X)}{d_0(X,L|_X)} - \frac{k}{2} \sum_{j=1}^s \alpha_j.
\end{eqnarray*}
\end{proof}

\bigskip
When $E$ has the right linearization, the Futaki invariant of $X$ is a multiple of the weight $\sum_{j=1}^s\alpha_j$ of $P= {\rm span}\{\sigma_1,\dots,\sigma_s\}$. Indeed we have the following 

\begin{cor}\label{cor::Futaki_peso}
In the situation of theorem \ref{thm::main_vect}, if the choosen linearization on $E$ satisfies $\int_M c_k^G(E)^s c_1^G(E)^{n-sk+1} = 0$ then \begin{equation}\label{eq::F_mult_MW} F(X,L|_X, \rho) = -C T \sum_{j=1}^s \alpha_j,\end{equation} where $C=\left(2p(n-sk+1)\int_M c_k(E)^sc_1(E)^{n-sk}\right)^{-1} >0$ and 
\begin{multline*} T = kp(n-sk+1)\int_M c_k(E)^sc_1(E)^{n-sk} \\ - (q-ps) \int_M c_k(E)^{s-1}c_{k-1}(E)c_1(E)^{n-sk+1}. \end{multline*} 
\end{cor}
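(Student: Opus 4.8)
The plan is to start from formula \eqref{eq::F_E_j} of Theorem \ref{thm::main_vect} and simply feed in the extra hypothesis, reorganizing the output so that $\sum_{j=1}^s\alpha_j$ factors out cleanly. The key observation is that the vanishing condition $\int_M c_k^G(E)^s c_1^G(E)^{n-sk+1}=0$ is precisely what kills the first, purely equivariant, summand of $a_0(X,L|_X)$. Indeed, working with the natural linearization on $L\simeq K_M^{-1}$ used in the proof of Theorem \ref{thm::main_vect}, one has $c_1^G(L)=c_1^G(M)$, and the linearized identity $(\det E)^q\simeq L^p$ gives $q\,c_1^G(E)=p\,c_1^G(M)$. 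Hence $\int_M c_k^G(E)^s c_1^G(L)^{n-sk+1}=(q/p)^{n-sk+1}\int_M c_k^G(E)^s c_1^G(E)^{n-sk+1}$, which vanishes by assumption. Therefore the formula for $a_0(X,L|_X)$ collapses to its second term, namely $-\sum_{j=1}^s\alpha_j\,\int_M c_k(E)^{s-1}c_{k-1}(E)c_1(M)^{n-sk+1}/(n-sk+1)!$.

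Next I would rewrite every ordinary (non-equivariant) Chern number appearing in the ratio $a_0/d_0$ in terms of $c_1(E)$ rather than $c_1(M)$, using the numerical identity $c_1(M)=(q/p)\,c_1(E)$ coming from $q\,c_1(E)=p\,c_1(M)$. Substituting the reduced $a_0(X,L|_X)$ together with $d_0(X,L|_X)$ into \eqref{eq::F_E_j}, the factorials combine into a single $1/(n-sk+1)$ and the powers of $q/p$ reduce to a single factor $q/p$, leaving
\begin{equation*}
F(X,L|_X,\rho)=-\left(\frac{k}{2}-\frac{q-ps}{2p(n-sk+1)}\,\frac{\int_M c_k(E)^{s-1}c_{k-1}(E)c_1(E)^{n-sk+1}}{\int_M c_k(E)^s c_1(E)^{n-sk}}\right)\sum_{j=1}^s\alpha_j.
\end{equation*}
It then remains to recognize the bracketed quantity as $CT$: multiplying numerator and denominator by the common factor $2p(n-sk+1)\int_M c_k(E)^s c_1(E)^{n-sk}$ turns the bracket into exactly $C\,T$ with $C$ and $T$ as stated, which is \eqref{eq::F_mult_MW}.

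Finally I would dispose of the positivity $C>0$. Since $d_0(X,L|_X)>0$ is the positive degree of the polarized variety $(X,L|_X)$ and, by $c_1(E)=(p/q)c_1(M)$, one has $\int_M c_k(E)^s c_1(E)^{n-sk}=(p/q)^{n-sk}(n-sk)!\,d_0(X,L|_X)$, the sign of $C$ is the sign of $p^{n-sk+1}/q^{n-sk}$; in the Fano situation at hand, with $L$ and $\det E$ ample, one has $p,q>0$, whence $C>0$. I expect no serious obstacle: the whole argument is a substitution into the already-proven Theorem \ref{thm::main_vect}, and the only points requiring care are the bookkeeping of the powers of $q/p$ when passing from $c_1(M)$ to $c_1(E)$, and the legitimacy of the equivariant rescaling in the first step, i.e. that inside the equivariant integral one may indeed replace $c_1^G(L)$ by $(q/p)\,c_1^G(E)$ so that the hypothesis forces the vanishing of the leading equivariant term.
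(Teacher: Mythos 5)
Your argument is correct and is essentially the paper's own proof: both substitute the expressions for $a_0(X,L|_X)$ and $d_0(X,L|_X)$ from Theorem \ref{thm::main_vect} into \eqref{eq::F_E_j} and use the linearized relation $q\,c_1^G(E)=p\,c_1^G(L)$ so that the hypothesis annihilates the purely equivariant term, leaving $-CT\sum_j\alpha_j$; the only cosmetic difference is that you kill that term before substituting while the paper substitutes first and then invokes the hypothesis. Your positivity argument for $C$ (identifying $1/C$ as a positive multiple of the degree of $(X,L|_X)$) is also the paper's.
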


\begin{proof}
Substituting the expressions of $a_0(X,L|_X)$ and $d_0(X,L|_X)$ on \eqref{eq::F_E_j} we get 
\begin{equation*}\label{eq::F_E_j_exp} 
F(X,L|_X, \rho) = - C \left( (q-ps)\int_M c_k^G(E)^s c_1^G(E)^{n-sk+1} + T \sum_{j=1}^s \alpha_j \right), \end{equation*} and formula \eqref{eq::F_mult_MW} follows immediatly by hypothesis.

To show the positivity of the constant $C$ is enough to observe that $L_X$ is ample and, by definition of $d_0(X,L|_X)$, the constant $1/C$ is a positive multiple of the degree of $(X,L|_X)$. 
\end{proof}

\begin{rem}\label{rem::positivity_T} Establishing the positivity of the constant $T$ is a problem quite delicate. At least when $E$ is ample, one would apply the theory of Fulton and Lazarsfeld \cite{FL83} to conclude that $T>0$. This is true when $q-ps \leq 0$ (i.e., by adjunction formula, when $X$ is not Fano), but unfotunately this is not true in general because the polynomial in the Chern classes defining $T$ is not numerically positive. Nevertheless, if $E$ is very ample (i.e. the tautological line bundle $\mathcal O _{\mathbb P(E)}(1)$ on $\mathbb P(E)$ is very ample), then by a theorem of of Beltrametti, Schneider and Sommese \cite{BSS96} we get the bound $$ T \geq k^{n-sk+1} (p(n+1) - kq), $$ that 
already gives a good number of examples, some of which are described in the last section.  
\end{rem}


\section{The case $E_j \simeq L^{r_j}$}\label{sec::Lrj}

Now we turn to consider the second case mentioned in the introduction. In particular we allow 
$L \neq K^{-1}_M$, but we consider sections $\sigma_j \in H^0(M,L^{r_j})$ in some tensor 
power of the polarizing bundle $L$. We have the following

\begin{thm}\label{thm::main_lin}
Let $(M,L)$ be a $n$-dimensional polarized manifold endowed with a $\mathbb C^\times$-action 
$\rho: \mathbb C^\times \to {\rm Aut}(M)$ and a linearization on $L$. 
For each $j\in\{1,\dots,s\}$ consider a section $\sigma_j \in H^0(M,L^{r_j})$ such that 
$\rho(t) \cdot \sigma_j = t^{\alpha_j} \sigma_j$ for some $\alpha_j \in \mathbb Z$. Let 
$X=\bigcap_{j=1}^s \sigma^{-1}(0)$. Suppose $\dim(X) = n-s$, then we have

\[
\begin{array}{cl} \label{eq::DF_CI}
F(X, L|_X, \rho)&  = F(M, L, \rho)  +\\
& \frac{1}{2} \left( - \sum_{j=1}^s \left( \frac{\alpha_j}{r_j} - \frac{a_0}{d_0} \right) r_j + 
\frac{\frac{2d_1}{nd_0} - \sum_{j=1}^s r_j}{n+1-s} \sum_{j=1}^s \left( \frac{\alpha_j}{r_j} - 
\frac{a_0}{d_0} \right)\right),
\end{array}
\]
\noindent where $a_0=a_0(M,L)=\int_M \frac{c_1^G(L)^{n+1}}{(n+1)!}$, 
$d_0 = d_0(M,L)= \int_M \frac{c_1(L)^n}{n!}$ and 
$d_1= d_1(M,L)=\int_M \frac{c_1(L)^{n-1}c_1(M)}{2(n-1)!}$.\end{thm}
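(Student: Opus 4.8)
The plan is to compute the coefficients $a_0, a_1, d_0, d_1$ for the pair $(X, L|_X)$ in terms of the corresponding data on the ambient $(M,L)$ and the weights $\alpha_j$, and then substitute into the formula of Definition \ref{defn::DF_invariant}. Since each $X_j = \sigma_j^{-1}(0)$ is the zero locus of a section of the line bundle $L^{r_j}$, and since $\dim(X) = n-s$ means the intersection is complete (the codimension is the expected one $s$), the key structural input is the Koszul resolution. Indeed, the $\sigma_j$ assemble into a section of $\bigoplus_{j=1}^s L^{r_j}$ whose zero locus is $X$, and the associated Koszul complex
\[
0 \to \bigwedge^s\Bigl(\bigoplus_j L^{-r_j}\Bigr) \to \cdots \to \bigoplus_j L^{-r_j} \to \mathcal{O}_M \to \mathcal{O}_X \to 0
\]
resolves $\mathcal{O}_X$. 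Tensoring with $L^m$ and taking the alternating sum of equivariant Euler characteristics gives, for $m \gg 0$ (where higher cohomology on $M$ vanishes), both $h^0(X, L^m|_X)$ and the weight $w(X, L^m|_X)$ as an inclusion–exclusion sum over the subsets of $\{1,\dots,s\}$ of Hilbert–Samuel data of $M$ twisted by the appropriate powers of $L$ and shifted weights. This is precisely the content I would expect from lemmata \ref{lem::asym_h^0_vect} and \ref{lem::asym_w^0_vect}, specialized to the line-bundle case $E_j \simeq L^{r_j}$.

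Concretely, I would first extract $d_0(X)$ and $d_1(X)$. From the Koszul alternating sum, $h^0(X,L^m|_X) = \sum_{I \subseteq \{1,\dots,s\}} (-1)^{|I|} h^0(M, L^{m - \sum_{i \in I} r_i})$ for large $m$, and using the asymptotic expansion \eqref{eq::asym_h^0(V,L^k)} on $M$ one reads off
\[
d_0(X) = \Bigl(\textstyle\prod_{j=1}^s r_j\Bigr) d_0, \qquad
d_1(X) = \Bigl(\textstyle\prod_{j=1}^s r_j\Bigr)\Bigl(d_1 - \tfrac{1}{2} d_0 \textstyle\sum_{j=1}^s r_j\Bigr),
\]
the factor $\prod r_j$ arising because the top-degree term of the finite-difference operator $\prod_j(1 - \text{shift by } r_j)$ applied to a degree-$n$ polynomial produces $\prod r_j$ times the leading coefficient. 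The same manipulation on the weight expansion \eqref{eq::asym_w(V,L^k)}, now keeping track of the semi-invariance $\rho(t)\cdot\sigma_j = t^{\alpha_j}\sigma_j$ which shifts the weight of the $I$-th Koszul term by $-\sum_{i \in I}\alpha_i$, yields $a_0(X)$ and $a_1(X)$ in terms of $a_0, a_1, d_0, d_1$ and the quantities $\alpha_j, r_j$. With all four coefficients in hand, the final step is to plug into
\[
F(X, L|_X, \rho) = \frac{a_0(X)\,d_1(X)}{d_0(X)^2} - \frac{a_1(X)}{d_0(X)}
\]
and simplify, the leading factors $\prod r_j$ cancelling, to recover the stated expression with the ambient Futaki invariant $F(M,L,\rho)$ appearing as the natural leading piece and the remainder packaging the Chow-weight correction.

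The main obstacle, and the step where I would spend the most care, is the bookkeeping in the weight expansion $a_0(X), a_1(X)$: unlike the degree expansion, the weight picks up the shifts in \emph{both} the $L$-grading (through $m - \sum_{i\in I} r_i$) \emph{and} the $\mathbb{C}^\times$-grading (through $-\sum_{i\in I}\alpha_i$), so the alternating sum is a finite difference in two coupled directions, and isolating the coefficients of $m^{n+1-s}$ and $m^{n-s}$ requires expanding products like $\prod_j(1 - t^{-\alpha_j}\cdot\text{shift}_{r_j})$ to the correct order. I would organize this by writing $\beta_j := \alpha_j/r_j - a_0/d_0$, which is the natural normalized weight-density defect appearing in the statement, and checking that the cross terms recombine into the two sums $\sum_j \beta_j r_j$ and $\sum_j \beta_j$ weighted by $\frac{2d_1/(nd_0) - \sum r_j}{n+1-s}$. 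A secondary technical point is justifying the vanishing of higher Koszul cohomology and of $H^{>0}(X, L^m|_X)$ for $m \gg 0$ so that the Euler-characteristic computation genuinely equals the $h^0$ and weight asymptotics; this follows from ampleness of $L$ and Serre vanishing, but should be noted. Finally, I would separately verify the claimed identification of the correction term with the Chow weight $\mu(X,M,L)$ and the sign/vanishing assertion $C \geq 0$ with equality iff $M \simeq \mathbb{P}^n$ and $r=1$, which is where the geometric meaning, rather than the formal computation, enters.
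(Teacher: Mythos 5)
Your overall strategy is the paper's own: resolve $\mathcal O_X$ by the Koszul complex of the section $(\sigma_1,\dots,\sigma_s)$ of $\bigoplus_j L^{r_j}$, twist away the semi-invariance so the complex is equivariant, extract the first two coefficients of the Hilbert and weight polynomials of $(X,L|_X)$, and substitute into Definition \ref{defn::DF_invariant}. The paper packages the first step into the general Lemmata \ref{lem::asym_h^0_vect} and \ref{lem::asym_w^0_vect} via Chern classes and (equivariant) Riemann--Roch and then specializes $c_s(\bigoplus L^{r_j})=(\prod r_j)c_1(L)^s$; your inclusion--exclusion over subsets $I\subseteq\{1,\dots,s\}$ of shifted Hilbert/weight polynomials of $(M,L)$ is an equivalent, slightly more elementary phrasing of the same computation in this line-bundle case, and your treatment of the two coupled shifts (in $m$ by $-\sum_{i\in I}r_i$ and in weight by $\sum_{i\in I}\alpha_i$) is the right way to handle the equivariance.

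However, the intermediate formulas you actually display are wrong, and not by an overall normalization. The leading term of $\prod_j(1-S_{r_j})$ applied to $d_0m^n$ is not $(\prod_j r_j)d_0\,m^{n-s}$ but $(\prod_j r_j)\,\frac{n!}{(n-s)!}\,d_0\,m^{n-s}$: each difference contributes a factor $r_j$ \emph{times the current degree}. More seriously, in the subleading coefficient the two contributions carry different falling factorials, so the correct answer is
\begin{equation*}
d_0(X)=\frac{n!}{(n-s)!}\Bigl(\prod_j r_j\Bigr)d_0,\qquad
d_1(X)=\frac{(n-1)!}{(n-s-1)!}\Bigl(\prod_j r_j\Bigr)\Bigl(d_1-\tfrac{n}{2}\,d_0\sum_j r_j\Bigr),
\end{equation*}
whereas you wrote $d_1-\tfrac12 d_0\sum_j r_j$; the relative factor of $n$ between the two terms cannot be absorbed into a rescaling and would propagate into the final formula (it is exactly what produces the combination $\frac{2d_1}{nd_0}-\sum_j r_j$ in the statement). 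The analogous corrections are needed in $a_0(X)$ and $a_1(X)$. This is a fixable computational slip rather than a conceptual gap, but as written the substitution into the Futaki formula would not reproduce the claimed identity. Finally, note that the identification of the correction term with the Chow weight and the assertion $C\geq 0$ (via Kobayashi--Ochiai) belong to Corollary \ref{cor::eq_deg}, not to the theorem you are proving, so they need not be verified here.
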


\begin{proof}
Since $c_s(L^{r_1}\oplus \dots \oplus L^{r_s})=\left(\prod_{j=1}^s r_j\right) c_1(L)^s$, 
$c_1(L^{r_1}\oplus \dots \oplus L^{r_s}) = \sum_{j=1}^s r_j c_1(L)$, by 
Lemma \ref{lem::asym_h^0_vect} we get
\begin{eqnarray*}
d_0(X) &=& \left(\prod_{j=1}^s r_j\right) \int_M \frac{c_1(L)^n}{(n-s)!} = 
\frac{ d_0 \, n!}{(n-s)!} \prod_{j=1}^s r_j \\
d_1(X) &=& \left(\prod_{j=1}^s r_j\right) \int_M \frac{ \left(c_1(M) - 
\sum_{j=1}^s r_j c_1(L)\right) c_1(L)^{n-1}}{2(n-s-1)!} \\
&=& \left(\frac{2d_1}{nd_0} - \sum_{j=1}^s r_j\right)
\frac{d_0 \, n!}{2(n-s-1)!}\prod_{j=1}^s r_j = \frac{n-s}{2}\left(\frac{2d_1}{nd_0} - 
\sum_{j=1}^s r_j\right) d_0(X),
\end{eqnarray*}
and analogously by \ref{lem::asym_w^0_vect}
\begin{eqnarray*}
a_0(X) &=& \left( \int_M \frac{c_1^G(L)^{n+1}}{(n-s+1)!} - 
\sum_{j=1}^s \frac{\alpha_j}{r_j} \int_M \frac{c_1(L)^n}{(n-s+1)!}\right) 
\prod_{j=1}^s r_j \\
&=& \frac{ a_0 (n+1)!}{(n-s+1)!} \prod_{j=1}^s r_j - \frac{\sum_{j=1}^s 
\frac{\alpha_j}{r_j}}{n-s+1}d_0(X)\\
a_1(X) &=& \left( \int_M  \frac{\left(c_1^G(M) - \sum_{j=1}^s r_j c_1^G(L)\right) 
c_1^G(L)^{n}}{2(n-s)!} + \sum_{j=1}^s \alpha_j \int_M \frac{c_1(L)^{n}}{2(n-s)!} + \right. \\
&&\left. - \sum_{j=1}^s \frac{\alpha_j}{r_j} \int_M \frac{\left(c_1(M) - 
\sum_{j=1}^s r_j c_1(L)\right) c_1(L)^{n-1}}{2(n-s)!}\right) \prod_{j=1}^s r_j \\
&=& \frac{a_1 n!}{(n-s)!} \prod_{j=1}^s r_j - \frac{a_0 (n+1)! \sum_{j=1}^s r_j}{2(n-s)!} 
\prod_{j=1}^s r_j + \\
&& +\frac{1}{2} d_0(X) \sum_{j=1}^s \alpha_j - \frac{1}{n-s} d_1(X) 
\sum_{j=1}^s \frac{\alpha_j}{r_j},
\end{eqnarray*}
Thus $F(X,L|_X, \rho)$ equals to 
\[
\begin{array}{ll}
 \left( \frac{n+1}{n-s+1} \frac{a_0}{d_0}  - \frac{\sum_{j=1}^s 
\frac{\alpha_j}{r_j}}{n-s+1}\right)\frac{d_1(X)}{d_0(X)} 
- \frac{a_1}{d_0} + 
 \frac{n+1}{2} \frac{a_0}{d_0} \sum_{j=1}^s r_j 
-\frac{1}{2} \sum_{j=1}^s \alpha_j + &\\ 
+ \frac{1}{n-s} \frac{d_1(X)}{d_0(X)} 
\sum_{j=1}^s \frac{\alpha_j}{r_j} 
 = \left( \frac{a_0}{d_0} +\frac{s}{n-s+1} \frac{a_0}{d_0}  + \frac{\sum_{j=1}^s 
\frac{\alpha_j}{r_j}}{(n-s)(n-s+1)}\right)\frac{d_1(X)}{d_0(X)} - & \\
- \frac{a_1}{d_0} + \frac{n+1}{2} \frac{a_0}{d_0} \sum_{j=1}^s r_j -\frac{1}{2} \sum_{j=1}^s \alpha_j 
= \frac{a_0d_1}{d_0^2} - \frac{n}{2} \sum_{j=1}^s r_j - & \\
- \frac{s}{2} 
\left(\frac{2d_1}{nd_0} - \sum_{j=1}^s r_j\right) \frac{a_0}{d_0} +\ 
 \frac{s}{2}
\frac{n-s}{n-s+1} \left(\frac{2d_1}{nd_0} - \sum_{j=1}^s r_j\right) \frac{a_0}{d_0} + &\\
+ \frac{\frac{1}{2}\left(\frac{2d_1}{nd_0} - \sum_{j=1}^s r_j\right)\sum_{j=1}^s 
\frac{\alpha_j}{r_j}}{n-s+1} 
- \frac{a_1}{d_0}  +  \frac{n+1}{2} \frac{a_0}{d_0} 
\sum_{j=1}^s r_j -\frac{1}{2} 
\sum_{j=1}^s \alpha_j  =& \\
=  \frac{a_0d_1}{d_0^2} - \frac{a_1}{d_0} - \frac{\frac{1}{2} \left(\frac{2d_1}{nd_0} - 
\sum_{j=1}^s r_j\right)s\frac{a_0}{d_0}}{n-s+1} + \\ 
 + \frac{\frac{1}{2}\left(\frac{2d_1}{nd_0} - \sum_{j=1}^s r_j\right)\sum_{j=1}^s 
\frac{\alpha_j}{r_j}}{n-s+1}
 + \frac{1}{2} \frac{a_0}{d_0} \sum_{j=1}^s r_j -\frac{1}{2} \sum_{j=1}^s \alpha_j \\
= \frac{a_0d_1}{d_0^2} - \frac{a_1}{d_0} + \frac{1}{2}\frac{\frac{2d_1}{nd_0} - 
\sum_{j=1}^s r_j}{n-s+1} \sum_{j=1}^s \left( \frac{\alpha_j}{r_j} - \frac{a_0}{d_0}\right) 
- \frac{1}{2} \sum_{j=1}^s \left( \frac{\alpha_j}{r_j} - \frac{a_0}{d_0} \right) r_j &
\end{array}
\]
and we are done.
\end{proof}

When $M=\mathbb P^n$ and $L=\mathcal O_{\mathbb P^n}(1)$ theorem \ref{thm::main_lin} gives the following result due to Z. Lu on complete intersections \cite{Lu}:

\begin{cor}
Let $X \subset \mathbb C^n$ be a $(n-s)$-dimensional subvariety defined by homogeneous polynomials $F_1, \dots, F_s$ of degree $r_1,\dots r_s$ respectively. Let $\rho:\mathbb C^\times \to SL(n+1)$ be a one parameter subgroup such that $$ \rho(t) \cdot F_j = t^{\alpha_j} F_j, \qquad j=1,\dots,s$$ for some $\alpha_1,\dots,\alpha_s \in \mathbb Z$.
Then we have
$$
F\left(X,\mathcal O_X(1), \rho\right) = \frac{1}{2} \left( - \sum_{j=1}^s \alpha_j + \frac{n+1 - \sum_{j=1}^s r_j}{n+1-s} \sum_{j=1}^s \frac{\alpha_j}{r_j} \right).
$$
\end{cor}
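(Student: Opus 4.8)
The plan is to obtain the corollary as the specialization of Theorem \ref{thm::main_lin} to $M=\mathbb P^n$ and $L=\mathcal O_{\mathbb P^n}(1)$, so that all the work reduces to evaluating the three ambient quantities $F(M,L,\rho)$, $a_0/d_0$ and $2d_1/(nd_0)$ that occur in that formula. First I would record the cohomology of projective space: writing $H=c_1(\mathcal O_{\mathbb P^n}(1))$ one has $\int_{\mathbb P^n}H^n=1$ and $c_1(\mathbb P^n)=(n+1)H$. Inserting these into the definitions of $d_0$ and $d_1$ gives $d_0=1/n!$ and $d_1=(n+1)/(2(n-1)!)$, hence $2d_1/(nd_0)=n+1$.

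The key step is the vanishing of the equivariant data forced by the hypothesis $\rho\subset SL(n+1)$. Since the Futaki invariant and the weight are invariant under conjugation, I may assume $\rho(t)=\mathrm{diag}(t^{w_0},\dots,t^{w_n})$ in suitable coordinates, and the condition $\rho\in SL(n+1)$ reads $\sum_i w_i=0$. The action on $H^0(\mathbb P^n,\mathcal O(m))=\mathrm{Sym}^m(\mathbb C^{n+1})^{\ast}$ is diagonal in the monomial basis, the monomial $\prod_i x_i^{a_i}$ with $\sum_i a_i=m$ carrying weight $\sum_i a_i w_i$ up to an overall sign. Summing over all $\binom{m+n}{n}$ monomials, symmetry makes every index contribute the same average exponent $m/(n+1)$, so the total weight $w(\mathbb P^n,\mathcal O(m))$ is a multiple of $\sum_i w_i=0$ and therefore vanishes identically in $m$. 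Consequently every coefficient of the expansion \eqref{eq::asym_h^0(V,L^k)} is zero, in particular $a_0(\mathbb P^n,\mathcal O(1))=a_1(\mathbb P^n,\mathcal O(1))=0$. This gives at once $a_0/d_0=0$ and $F(\mathbb P^n,\mathcal O(1),\rho)=a_0d_1/d_0^2-a_1/d_0=0$; the latter also follows directly from the existence of the Fubini--Study Kcsc metric in $c_1(\mathcal O(1))$, whose Futaki invariant vanishes for every $\mathbb C^\times$-action.

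Finally I would substitute $F(M,L,\rho)=0$, $a_0/d_0=0$ and $2d_1/(nd_0)=n+1$ into the formula of Theorem \ref{thm::main_lin}. The first sum collapses via $\sum_{j}(\alpha_j/r_j)\,r_j=\sum_j\alpha_j$, and the bracket becomes exactly $-\sum_j\alpha_j+\frac{n+1-\sum_j r_j}{\,n+1-s\,}\sum_j\alpha_j/r_j$, which is the asserted expression. I do not anticipate any serious obstacle: the entire content is the observation that the $SL(n+1)$-hypothesis annihilates the ambient equivariant weight, and this is precisely what collapses the general formula to the stated closed form; the remainder is bookkeeping. The only point deserving a little care is the normalization of the $\alpha_j$, which must be taken with respect to the same $SL$-linearization of $\mathcal O(1)$ that makes $a_0$ vanish; this is automatic, since the relation $\rho(t)\cdot F_j=t^{\alpha_j}F_j$ already refers to the $SL(n+1)$-action on homogeneous polynomials.
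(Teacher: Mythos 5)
Your proposal is correct and follows essentially the same route as the paper: specialize Theorem \ref{thm::main_lin} to $(\mathbb P^n,\mathcal O_{\mathbb P^n}(1))$, compute $\frac{2d_1}{nd_0}=n+1$ from $h^0(\mathbb P^n,\mathcal O(m))=\binom{n+m}{m}$ (equivalently from $c_1(\mathbb P^n)=(n+1)H$), and use the $SL(n+1)$-linearization to get $w(\mathbb P^n,\mathcal O(m))=0$, hence $a_0=a_1=0$ and $F(\mathbb P^n,\mathcal O(1),\rho)=0$. Your explicit verification that the total monomial weight is a multiple of $\sum_i w_i=0$ is a small addition the paper leaves implicit, but the argument is the same.
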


\begin{proof}
Since $H^0(\mathbb P^n, \mathcal O_{\mathbb P^n}(m)) \simeq \mathbb C[z_0,\dots,z_n]_m$ then $$h^0(\mathbb P^n, \mathcal O_{\mathbb P^n}(m)) = \binom{n+m}{m} = \frac{1}{n!} m^n + \frac{n(n+1)}{2n!} m^{n-1} + O(m^{n-2}),$$ thus $\frac{2d_1}{nd_0} = n+1$. Moreover, taking on $\mathcal O_{\mathbb P^n}$ the unique linearization induced by $SL(n+1)$ we get $w(\mathbb P^n, \mathcal O_{\mathbb P^n}(m))=0$, and in particular $a_0=0$.
\end{proof}

The formula \eqref{eq::DF_CI} becomes simpler if all the $r_j$'s are equal. Moreover in this case $F(X,L|_X, \rho)$ has a nice expression in term of the so-called ``Chow weight'' of $(X,L|_X)$, whose definition, essentially due to Mumford \cite{Mum77}, is the following

\begin{defn}\label{defn::Cw}
In the situation of definition \ref{defn::DF_invariant}, let $X \subset V$ be a $s$-codimensional invariant subvariety. Thus $L|_X$ is a linearized line bundle and we have the asymptotic expansions 
\begin{eqnarray*} 
w(X,L|_X^m) &=& a_0(X,L|_X) m^{n-s+1} + O(m^{n-s}) \\ h^0(X,L|_X^m) &=& 
d_0(X,L|_X) m^{n-s} + O(m^{n-s-1}). 
\end{eqnarray*} 
The \emph {Chow weight} of $X$ with respect the chosen one-parameter subgroup of ${\rm Aut}(V)$ is 
$$ \mu(X,V,L) = \frac{a_0(V,L)}{d_0(V,L)} - \frac{a_0(X,L|_X)}{d_0(X,L|_X)}.$$
If $G \subset {\rm Aut}(V)$ is a reductive subgroup, we say that $X$ is \emph{Chow stable (resp. semi-stable) w.r.t. G} if $ \mu(X) < 0$ (resp. $\leq$) for all one-parameter subgroups of $G$.
\end{defn}

\begin{cor}\label{cor::eq_deg}
In the situation of theorem \ref{thm::main_lin}, if $r_j=r$ for all $j$, then $X$ is a section of $M$ determined by the linear system $P = {\rm span} (\sigma_1, \dots \sigma_s) \subset H^0(M,L^r)$. 
In this case we have: 
\begin{eqnarray*} F(X,L|_X, \rho) &=& F(M,L, \rho)  +
 \frac{\frac{2d_1}{nd_0}-r(n+1)}{2(n+1-s)} \sum_{j=1}^s 
\left(\frac{\alpha_j}{r} - \frac{a_0}{d_0}\right) \\ &=& F(M,L, \rho) - C \mu \left(X,M,L\right), 
\end{eqnarray*} 
where $C=r\frac{n+1}{2}-\frac{d_1(M,L)}{nd_0(M,L)} \geq 0$ with equality if and only if $M\simeq \mathbb P^n$ and $r=1$.
\end{cor}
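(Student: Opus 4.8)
The plan is to derive everything from Theorem \ref{thm::main_lin} together with the intersection-theoretic expressions for $d_0(X),d_1(X),a_0(X),a_1(X)$ already produced in its proof; only the concluding positivity statement needs genuinely new geometric input, and that is where the real difficulty lies.

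First I would specialize the formula of Theorem \ref{thm::main_lin} to $r_j=r$. Then $\sum_j r_j=sr$ and the two sums occurring there collapse, since $\sum_j(\alpha_j/r_j-a_0/d_0)r_j=r\sum_j(\alpha_j/r-a_0/d_0)$, so the whole correction factors through the single quantity $\sum_{j=1}^s(\alpha_j/r-a_0/d_0)$. Collecting its coefficient gives
\[
-r+\frac{\tfrac{2d_1}{nd_0}-sr}{n+1-s}=\frac{\tfrac{2d_1}{nd_0}-r(n+1)}{n+1-s},
\]
which is exactly the first displayed identity for $F(X,L|_X,\rho)$. This step is pure algebra.

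Next I would identify the sum with the Chow weight of Definition \ref{defn::Cw}. From the proof of Theorem \ref{thm::main_lin} one has $d_0(X)=\frac{d_0\,n!}{(n-s)!}\prod_j r_j$ and $a_0(X)=\frac{a_0(n+1)!}{(n-s+1)!}\prod_j r_j-\frac{\sum_j\alpha_j/r_j}{n-s+1}d_0(X)$, whence
\[
\frac{a_0(X)}{d_0(X)}=\frac{n+1}{n-s+1}\frac{a_0}{d_0}-\frac{1}{n-s+1}\sum_{j=1}^s\frac{\alpha_j}{r_j}.
\]
Substituting this into Definition \ref{defn::Cw} yields $\mu(X,M,L)=\frac{a_0}{d_0}-\frac{a_0(X)}{d_0(X)}=\frac{1}{n+1-s}\sum_{j=1}^s(\alpha_j/r_j-a_0/d_0)$, which for $r_j=r$ converts the first identity into $F(X,L|_X,\rho)=F(M,L,\rho)-C\,\mu(X,M,L)$, using that $\tfrac12\bigl(r(n+1)-\tfrac{2d_1}{nd_0}\bigr)=C$.

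Finally, for the sign of $C$ I would rewrite it intersection-theoretically. Using $d_1/d_0=\frac{n}{2}\,(-K_M\cdot L^{n-1})/L^n$ one obtains
\[
C=\frac{1}{2}\,\frac{\bigl(K_M+r(n+1)L\bigr)\cdot L^{n-1}}{L^n}.
\]
Since $r\ge 1$ and $L$ is ample, $(K_M+r(n+1)L)\cdot L^{n-1}\ge (K_M+(n+1)L)\cdot L^{n-1}$, so it suffices to know that the adjoint class $K_M+(n+1)L$ is nef. The hard part is exactly this geometric input: I would invoke the standard adjunction-theoretic fact that $K_M+(n+1)L$ is nef for every $n$-dimensional polarized manifold, with the single exception of $(\mathbb{P}^n,\mathcal{O}(1))$, where it is trivial. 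Granting it, $(K_M+(n+1)L)\cdot L^{n-1}\ge 0$ forces $C\ge 0$. For the equality case, $C=0$ forces $r=1$ and $(K_M+(n+1)L)\cdot L^{n-1}=0$; since a nef class meeting an ample power in zero is numerically trivial, this gives $-K_M\equiv(n+1)L$, and the Kobayashi--Ochiai characterization of projective space then yields $(M,L)\cong(\mathbb{P}^n,\mathcal{O}(1))$, while conversely this pair with $r=1$ clearly gives $C=0$. The only genuine obstacle is importing the nefness of the adjoint bundle and the rigidity of $\mathbb{P}^n$; the first two steps are mere bookkeeping on formulas already established.
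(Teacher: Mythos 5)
Your proposal is correct and follows essentially the same route as the paper: specialize Theorem \ref{thm::main_lin} to $r_j=r$, extract $\mu(X,M,L)=\frac{1}{n+1-s}\sum_{j=1}^s\bigl(\frac{\alpha_j}{r}-\frac{a_0}{d_0}\bigr)$ from the expressions for $a_0(X)$ and $d_0(X)$ already obtained in that proof, and control the sign of $C$ via $\int_M c_1(M)c_1(L)^{n-1}\le (n+1)\int_M c_1(L)^n$ with equality only for $(\mathbb P^n,\mathcal O_{\mathbb P^n}(1))$. If anything you are more careful than the paper on the last step: the paper attributes the whole inequality to Kobayashi--Ochiai, whereas, as you correctly identify, the inequality itself rests on the nefness of the adjoint class $K_M+(n+1)L$, with Kobayashi--Ochiai needed only to pin down the equality case.
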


\begin{proof}
The first equation is an obvious consequence of \eqref{eq::DF_CI} when $r_j = r$ for all $j$. The second is a consequence of $$ \mu(X,M,L) = \frac{1}{n+1-s} \sum_{j=1}^s \left( \frac{\alpha_j}{r_j} - \frac{a_0}{d_0}\right),$$ which follows from definition of Chow weight \ref{defn::Cw} and from formulae for $a_0(X,L|_X)$ and $d_0(X,L|_X)$ in lemmata \ref{lem::asym_h^0_vect} and \ref{lem::asym_w^0_vect}. To prove the non-negativity of $C$, by Kobayashi-Ochiai \cite[Theorem 1.1]{KO73} we get $c_1(M) \leq (n+1)c_1(L)$ with equality if and only if $(M,L) = (\mathbb P^n,\mathcal O_{\mathbb P^n}(1))$. Thus we have $$ \frac{d_1(M,L)}{n\,d_0(M,L)} = \frac{\int_Mc_1(L)^{n-1}c_1(M)}{2\int_Mc_1(L)^n} \leq \frac{n+1}{2}$$ and the statement follows.
\end{proof}

\begin{rem}
 In the case $F(M,L, \rho)=0$ (e.g. when $M$ admits a cscK metric in $c_1(L)$), the sign of 
 $F(X,L|_X,\rho)$ is the product of the signs of $\mu(X,M,L)$. Thus $X$ is $K$-unstable if it is Chow-unstable. 
\end{rem}


\section{Proofs of fundamental lemmata}

\begin{lem}\label{lem::id_chern}
Let $B$ be a holomorphic vector bundle of rank $b$ on a manifold $M$, then $$\sum_{p=0}^k (-1)^p ch\left(\wedge^p B^*\right) = c_b(B) td(B)^{-1}.$$
\end{lem}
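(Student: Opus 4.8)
The plan is to prove the identity via the splitting principle, reducing everything to a generating-function manipulation in the Chern roots of $B$. First I would invoke the splitting principle to write the total Chern class as $c(B) = \prod_{i=1}^b (1 + x_i)$, so that $x_1, \dots, x_b$ are the Chern roots of $B$ and $-x_1, \dots, -x_b$ are the Chern roots of $B^*$. The exterior power $\wedge^p B^*$ then has Chern roots given by the sums $-(x_{i_1} + \dots + x_{i_p})$ over all $p$-element subsets $\{i_1 < \dots < i_p\} \subset \{1, \dots, b\}$, so that its Chern character is the elementary symmetric polynomial $ch(\wedge^p B^*) = e_p(e^{-x_1}, \dots, e^{-x_b})$. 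I would also note that $\wedge^p B^* = 0$ for $p > b$, so the summation index may be taken to run up to $b$ (the stated upper limit $k$ should be read as $b$, or as any integer $\geq b$).

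Next I would form the alternating sum and apply the classical generating identity $\sum_{p \geq 0} (-1)^p e_p(y_1, \dots, y_b) = \prod_{i=1}^b (1 - y_i)$, evaluated at $y_i = e^{-x_i}$. This yields
\[
\sum_{p=0}^{b} (-1)^p ch(\wedge^p B^*) = \prod_{i=1}^b \left(1 - e^{-x_i}\right).
\]

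The final step is to recognise the right-hand side of the lemma inside this product. Factoring $x_i$ out of each term gives
\[
\prod_{i=1}^b \left(1 - e^{-x_i}\right) = \left(\prod_{i=1}^b x_i\right) \prod_{i=1}^b \frac{1 - e^{-x_i}}{x_i} = c_b(B)\, td(B)^{-1},
\]
since $\prod_{i=1}^b x_i = c_b(B)$ is the top Chern class and $td(B) = \prod_{i=1}^b \frac{x_i}{1 - e^{-x_i}}$ is precisely the Todd class by definition. As the resulting expression is a symmetric function of the $x_i$, it descends from the splitting flag bundle back to cohomology classes on $M$, which completes the argument.

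There is essentially no serious obstacle here: the whole computation is bookkeeping once the splitting principle is in place. The only point requiring genuine care is matching conventions — the sign in the Chern roots of $B^*$ and the exact normalisation of the Todd series $\frac{x}{1 - e^{-x}}$ — so that the factor $1 - e^{-x_i}$ produced by the alternating sum over exterior powers lines up with $td(B)^{-1}$, and not with $td(B)$ itself.
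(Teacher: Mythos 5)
Your argument is correct and is essentially identical to the paper's own proof (which cites Borel--Serre and Fulton for the same computation): both use the splitting principle, express $ch(\wedge^p B^*)$ as the elementary symmetric polynomial in the $e^{-\alpha_i}$, collapse the alternating sum to $\prod_i(1-e^{-\alpha_i})$, and factor out $\prod_i \alpha_i = c_b(B)$ to reveal $td(B)^{-1}$. Your remark that the upper summation limit $k$ should be read as $b$ is a fair correction of a typo in the statement.
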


\begin{proof}
It is lemma 18 in \cite{BS58} (see also \cite[Example 3.2.5]{F}).
Let $\alpha_1,\dots,\alpha_b$ be Chern roots of $B$. Since $ch\left( \wedge^p B^* \right) = \sum_{1 \leq i_1 < \dots < i_p \leq b} e^{-(\alpha_{i_1}+\dots+\alpha_{i_p})}$, then we have \begin{eqnarray*}
\sum_{p=0}^k (-1)^pch\left(\wedge^p B^*\right) &=& \sum_{p=0}^b (-1)^p \sum_{1 \leq i_1 < \dots < i_p \leq b} e^{-(\alpha_{i_1}+\dots+\alpha_{i_p})} \\
&=& \prod_{i=1}^b \left( 1-e^{-\alpha_i} \right) \\
&=& \prod_{i=1}^b \alpha_i \prod_{i=1}^b \frac{ 1 - e^{-\alpha_i} }{\alpha_i},
\end{eqnarray*}
and the statement is proved.
\end{proof}

\begin{lem}\label{lem::asym_h^0_vect}
Let $(M,L)$ be a $n$-dimensional polarized manifold and let $E_1,\dots,E_s$ be a collection of holomorphic vector bundles on $M$. Set $k_j= {\rm rank} (E_j)$, 
$B=E_1\oplus \dots \oplus E_s$ and $b={\rm rank}(B)=\sum_{j=1}^s k_j$.  
For each $j\in\{1,\dots,s\}$ consider a non-zero section $\sigma_j \in H^0(M,E_j)$ 
and set $\sigma = (\sigma_1,\dots,\sigma_s) \in H^0(M,B)$ and $X=\sigma^{-1}(0)$. 
If $\dim(X)=n-b$ we have the asymptotic expansion as 
$k \to +\infty$ $$ h^0 \left( X , L|_X^m\right) = d_0(X) m^{n-b} + d_1(X) m^{n-b-1} + O(m^{n-b-2}),$$ where
\begin{eqnarray*}
d_0(X) &=& \int_M \frac{c_b(B) c_1(L)^{n-b}}{(n-b)!} \\
d_1(X) &=& \int_M \frac{c_b(B) \left(c_1(M) - c_1(B)\right) c_1(L)^{n-b-1}}{2(n-b-1)!},
\end{eqnarray*}
(here $c_b(B) = \prod_{j=1}^s c_{k_j}(E_j)$ and $c_1(B)=\sum_{j=1}^s c_1(E_j)$).
\end{lem}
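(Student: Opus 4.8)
The plan is to resolve the structure sheaf of $X$ by a Koszul complex and then reduce the computation of $h^0(X,L|_X^m)$ to a Hirzebruch--Riemann--Roch calculation on the ambient manifold $M$. The hypothesis $\dim(X) = n - b$ means that the zero scheme of $\sigma \in H^0(M,B)$ has the expected codimension $b = {\rm rank}(B)$, so $\sigma$ is a regular section. Consequently the Koszul complex associated to $\sigma$,
\[
0 \to \wedge^b B^* \to \wedge^{b-1} B^* \to \cdots \to \wedge^1 B^* \to \mathcal O_M \to \mathcal O_X \to 0,
\]
is exact and provides a locally free resolution of $\mathcal O_X$ as an $\mathcal O_M$-module. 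This regularity step is the one genuine input of the proof; everything afterwards is formal manipulation of characteristic classes, and it works even when $X$ is singular or non-reduced.

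First I would tensor this resolution by the locally free sheaf $L^m$, which preserves exactness, and use the additivity of the Euler characteristic over exact complexes to write, for every $m$,
\[
\chi(X,L|_X^m) = \sum_{p=0}^b (-1)^p \chi\!\left(M, \wedge^p B^* \otimes L^m\right).
\]
No vanishing theorem is needed here, since additivity of $\chi$ over the resolution holds termwise. Applying Hirzebruch--Riemann--Roch to each summand gives $\chi(M,\wedge^p B^*\otimes L^m) = \int_M ch(\wedge^p B^*)\, e^{m c_1(L)}\, td(M)$, and summing over $p$ together with Lemma \ref{lem::id_chern}, which evaluates $\sum_{p}(-1)^p ch(\wedge^p B^*) = c_b(B)\, td(B)^{-1}$, collapses the alternating sum to
\[
\chi(X,L|_X^m) = \int_M c_b(B)\, td(B)^{-1}\, e^{m c_1(L)}\, td(M).
\]

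Finally, since $L|_X$ is ample, Serre vanishing gives $h^0(X,L|_X^m)=\chi(X,L|_X^m)$ for $m \gg 0$, so it remains to read off the top two coefficients of the polynomial in $m$ above. The factor $c_b(B)$ contributes cohomological degree $b$, so the coefficient of $m^{n-b}$ pairs $\frac{c_1(L)^{n-b}}{(n-b)!}$ with the degree-$0$ part of $td(B)^{-1}td(M)$, which is $1$, yielding $d_0(X) = \int_M \frac{c_b(B)\,c_1(L)^{n-b}}{(n-b)!}$. The coefficient of $m^{n-b-1}$ pairs $\frac{c_1(L)^{n-b-1}}{(n-b-1)!}$ with the degree-$1$ part of $td(B)^{-1}td(M)$; expanding $td(M)=1+\tfrac12 c_1(M)+\cdots$ and $td(B)^{-1}=1-\tfrac12 c_1(B)+\cdots$ gives this degree-$1$ part as $\tfrac12(c_1(M)-c_1(B))$, whence $d_1(X)=\int_M \frac{c_b(B)(c_1(M)-c_1(B))c_1(L)^{n-b-1}}{2(n-b-1)!}$, as claimed. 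The only delicate point is the exactness of the Koszul complex, i.e.\ that the codimension hypothesis forces $\sigma$ to be a regular section; the remainder is the bookkeeping just described.
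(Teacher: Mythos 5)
Your proposal is correct and follows essentially the same route as the paper's own proof: the Koszul resolution of $\mathcal O_X$ (exact because the expected-codimension hypothesis makes $\sigma$ a regular section on the smooth, hence Cohen--Macaulay, ambient $M$), additivity of $\chi$ over the tensored resolution, Hirzebruch--Riemann--Roch combined with Lemma \ref{lem::id_chern}, and Serre vanishing to identify $h^0$ with $\chi$ for $m\gg 0$. The extraction of the two leading coefficients from $td(B)^{-1}td(M)$ matches the paper's computation exactly.
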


\begin{proof}
Let $\mathcal O_X$ be the structure sheaf of $X$. By assumption $\sigma$ is a regular section, so the Koszul complex 
$$0 \to \wedge^b B^* \to \wedge^{b-1} B^* \to \dots \to B^* \to \mathcal O_M \to \mathcal O_X \to 0 $$ induced by $\sigma$ is exact. 
Tensoring by $L^m$ preserves the exacteness, thus 
$$\chi(X,L|_X^m ) = \sum_{p=0}^b (-1)^p \chi\left(M,L^m \otimes \wedge^p B^*\right)$$ 
and by the Hirzebruch-Riemann-Roch theorem we get

\begin{eqnarray*}
\chi(X,L|_X^m) &=& \sum_{p=0}^b (-1)^p \int_M ch\left( \wedge^p B^* \right) e^{mc_1(L)} td(M) \\
&=& \int_M ch\left( \sum_{p=0}^b (-1)^p \wedge^p B^* \right) e^{m c_1(L)} td(M) \\
&=& \int_M c_b(B) td(B)^{-1} e^{m c_1(L)} td(M),
\end{eqnarray*}
where second equality follows by elementary properties of the Chern character, and the last one holds by lemma \ref{lem::id_chern}.

As $k\to +\infty$ we have the expansion

\begin{eqnarray*}
\chi(X,L|_X^m) &=& m^{n-b} \int_M \frac{c_b(B) c_1(L)^{n-b}}{(n-b)!} + \\
&& + m^{n-b-1}  \int_M \frac{c_b(B) \left(c_1(M) - c_1(B)\right) c_1(L)^{n-b-1}}{2(n-b-1)!} \\
&& + O(m^{n-b-2}),
\end{eqnarray*}
where we used $td(M)=1+\frac{1}{2}c_1(M)+\dots$ and $td(B)^{-1} = 1-\frac{1}{2}c_1(B) + \dots$ (dots representing terms of degree greater then one). Finally the equality $h^0(X,L|_X^m) = \chi(X,L|_X^m)$ follows by ampleness of $L$.
\end{proof}

\begin{lem}\label{lem::asym_w^0_vect}
Let $(M,L)$ be a $n$-dimensional polarized manifold endowed with a $\mathbb C^\times$-action $\rho: \mathbb C^\times \to {\rm Aut}(M)$ and a linearization on $L$. Let $E_1,\dots,E_s$ be a collection of linearized vector bundles on $M$. Set $k_j= {\rm rank} (E_j)$, $B=E_1\oplus \dots \oplus E_s$ and $b={\rm rank} (B)=\sum_{j=1}^s k_j$.  For each $j\in\{1,\dots,s\}$ consider a non-zero section $\sigma_j \in H^0(M,E_j)$ such that $\rho(t) \cdot \sigma_j = t^{\alpha_j} \sigma_j$ for some $\alpha_j \in \mathbb Z$, and set $\sigma = (\sigma_1,\dots,\sigma_s) \in H^0(M,B)$ and $X=\sigma^{-1}(0)$. If $\dim(X)=n-b$, then we have the asymptotic expansion as $k \to +\infty$ $$ w^0 \left( X , L|_X^m\right) = a_0(X) m^{n-b+1} + a_1(X) m^{n-b} + O(m^{n-b-1}),$$ where
\begin{eqnarray*}
a_0(X) &=& \int_M \frac{c_b^G(B) c_1^G(L)^{n-b+1}}{(n-b+1)!} - \sum_{j=1}^s \alpha_j \int_M \frac{c_b(B) c_{k_j-1}(E_j) c_1(L)^{n-b+1}}{(n-b+1)!\, c_{k_j}(E_j)} \\
a_1(X) &=& \int_M \frac{c_b^G(B) \left(c_1^G(M) - c_1^G(B)\right) c_1^G(L)^{n-b}}{2(n-b)!}   \\
&& + \sum_{j=1}^s k_j\alpha_j \int_M \frac{c_b(B) c_1(L)^{n-b}}{2(n-b)!}  \\
&&- \sum_{j=1}^s \alpha_j \int_M \frac{c_b(B) c_{k_j-1}(E_j) \left(c_1(M) - c_1(B)\right) c_1(L)^{n-b}}{2(n-b)!\, c_{k_j}(E_j)},
\end{eqnarray*}
(here $c_b^G(B) = \prod_{j=1}^s c_{k_j}^G(E_j)$ and $c_1^G(B)=\sum_{j=1}^s c_1^G(E_j)$).
\end{lem}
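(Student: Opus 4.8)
The plan is to mirror the proof of Lemma \ref{lem::asym_h^0_vect}, but to work $\mathbb{C}^\times$-equivariantly so that the weight $w^0(X,L|_X^m)$ is read off as the coefficient of the equivariant parameter. The one genuinely new ingredient is to arrange the Koszul resolution to be equivariant despite the fact that the sections $\sigma_j$ carry different weights $\alpha_j$, which would otherwise obstruct equivariance of the contraction differential.

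First I would make each section invariant by twisting the linearizations. Since $\rho(t)\cdot\sigma_j = t^{\alpha_j}\sigma_j$, the section $\sigma_j$ becomes an invariant section of the linearized bundle $E_j' := E_j\otimes\mathbb{C}_{-\alpha_j}$, where $\mathbb{C}_{-\alpha_j}$ is the trivial line bundle on which $\mathbb{C}^\times$ acts with weight $-\alpha_j$ (indeed $\rho(t)\cdot(\sigma_j\otimes v)=t^{\alpha_j-\alpha_j}(\sigma_j\otimes v)$). Setting $B'=\bigoplus_j E_j'$ and $\sigma'=(\sigma_1,\dots,\sigma_s)$, the section $\sigma'$ is $\mathbb{C}^\times$-invariant and $X=(\sigma')^{-1}(0)$ is unchanged. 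The Koszul complex $0\to\wedge^b(B')^*\to\dots\to\mathcal O_M\to\mathcal O_X\to 0$ now has weight-preserving differentials, and tensoring by $L^m$ keeps it an equivariant exact complex. Since the weight of $\bigwedge^{\rm top}$ is additive on short exact sequences of $\mathbb{C}^\times$-modules, the alternating sum of weights over an exact complex vanishes, so $w(X,L|_X^m)=\sum_{p=0}^b(-1)^p w(M,L^m\otimes\wedge^p(B')^*)$; for $m\gg 0$ ampleness of $L$ kills the higher cohomology, so these are genuine $H^0$-weights.

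Next I would invoke equivariant Hirzebruch--Riemann--Roch. The equivariant analogue of Lemma \ref{lem::id_chern}, proved verbatim with equivariant Chern roots, gives $\sum_{p=0}^b(-1)^p ch^G(\wedge^p(B')^*)=c_b^G(B')\,td^G(B')^{-1}$. Combined with $ch^G(L^m)=e^{m c_1^G(L)}$ this yields $\chi^G(X,L|_X^m)=\int_M c_b^G(B')\,td^G(B')^{-1}\,e^{m c_1^G(L)}\,td^G(M)$, an equivariant integral whose coefficient of the equivariant parameter (call it $u$) is exactly $w^0(X,L|_X^m)$, consistently with the convention of Definition \ref{defn::DF_invariant} and Remark \ref{rem::aclb}.

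Finally I would translate back to the untwisted bundles $E_j$. Tensoring $E_j$ by $\mathbb{C}_{-\alpha_j}$ shifts each equivariant Chern root by $-\alpha_j u$, whence $c_{k_j}^G(E_j')=c_{k_j}^G(E_j)-\alpha_j\,c_{k_j-1}(E_j)\,u+O(u^2)$ and $c_1^G(B')=c_1^G(B)-\bigl(\sum_j k_j\alpha_j\bigr)u$, so that $c_b^G(B')=c_b^G(B)-\bigl(\sum_j \alpha_j\,c_b(B)\,c_{k_j-1}(E_j)/c_{k_j}(E_j)\bigr)u+O(u^2)$. Substituting these first-order expansions into the integral, using $td^G(M)=1+\tfrac12 c_1^G(M)+\dots$ and $td^G(B')^{-1}=1-\tfrac12 c_1^G(B')+\dots$, and then extracting the coefficient of $u$ followed by the coefficients of $m^{n-b+1}$ and $m^{n-b}$, I would obtain the stated expressions for $a_0(X)$ and $a_1(X)$: the ``principal'' terms reproduce Lemma \ref{lem::asym_h^0_vect} equivariantized, while the extra families carrying $\sum_j\alpha_j$ and $\sum_j k_j\alpha_j$ are precisely the first-order twist corrections to the $c^G$'s. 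I expect the main obstacle to be purely organizational rather than conceptual: the $a_1$ coefficient collects three separate contributions, and one must track equivariant degree against $m$-degree carefully so that, e.g., the formal quotient $c_b(B)\,c_{k_j-1}(E_j)/c_{k_j}(E_j)$ is correctly read as $c_{k_j-1}(E_j)\prod_{l\neq j}c_{k_l}(E_l)$. The genuine crux, by contrast, is the twisting step that renders the Koszul differentials equivariant.
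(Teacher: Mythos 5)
Your proposal is correct and follows essentially the same route as the paper: twist each $E_j$ by $\mathbb C_{-\alpha_j}$ to make the Koszul resolution equivariant, apply the equivariant Riemann--Roch theorem together with the equivariant version of Lemma \ref{lem::id_chern}, extract the linear term in the equivariant parameter, and undo the twist via the first-order expansions $c_{k_j}^G(E_j\otimes\mathbb C_{-\alpha_j})=c_{k_j}^G(E_j)-\alpha_j c_{k_j-1}^G(E_j)+\dots$ and $c_1^G(B')=c_1^G(B)-\sum_j k_j\alpha_j$. This matches the paper's argument step for step, including the identification of the crux as the equivariance-restoring twist.
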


\begin{proof}
It is very similar to the previous on the dimension of $H^0(X,L|_X^m)$. Since sections $\sigma_j$ are only semi-invariant, they do not give rise to equivariant sequences of bundles, but to overcame the problem we can initially change the linearization of each $E_j$ and go back to original one at the end of computations. Denoted by $\mathbb C_{\beta}$ the trivial line bundle on $M$ with linearization $t \cdot u = t^{\beta} u$, for each $j \in \{1,\dots,s\}$ let $$ F_j = E_j \otimes \mathbb C_{-\alpha_j}.$$ In this way, each $\sigma_j \in H^0(M,E_j)$ is an invariant section.

Now consider the rank $b = \sum_{j=1}^s k_j$, $\mathbb C^\times$-linearized vector bundle $F=\bigoplus_{j=1}^s F_j$, and let $\sigma \in H^0(M,F)$ be the holomorphic section defined by $\sigma= (\sigma_1,\dots,\sigma_s)$. Clearly $\sigma$ is invariant and we have $X = \sigma^{-1}(0)$. Let $\mathcal O_X$ be the structure sheaf of $X$. By assumption $\sigma$ is a regular section, so the Koszul complex $$0 \to \wedge^b F^* \to \wedge^{b-1} F^* \to \dots \to F^* \to \mathcal O_M \to \mathcal O_X \to 0 $$ induced by $\sigma$ is exact and equivariant. Tensoring by $L^m$ preserves the exacteness and equivariance, thus $$\chi^G(X,L^m|_X) = \sum_q (-1)^q {\rm tr}\left(e^{it} | H^q(X, L^m|_X) \right) = \sum_{p=0}^b (-1)^p \chi^G\left(M,L^m \otimes \wedge^p F^*\right)$$ and by the equivariant Riemann-Roch theorem we get

\begin{eqnarray*}
\chi^G(X,L|_X^m) &=& \sum_{p=0}^b (-1)^p \int_M ch^G\left( \wedge^p F^* \right) e^{mc_1^G(L)} td^G(M) \\
&=& \int_M ch^G\left( \sum_{p=0}^b (-1)^p \wedge^p F^* \right) e^{m c_1^G(L)} td^G(M) \\
&=& \int_M c_b^G(F) td^G(F)^{-1} e^{m c_1^G(L)} td^G(M),
\end{eqnarray*}
where the last equality holds by lemma \ref{lem::id_chern}. Since the right part of the equivariant Riemann-Roch theorem is a power series convergent in some neighborhood of zero of the lie algebra of the acting group, to get the trace of the generator of the action on the virtual space $\bigoplus_q (-1)^q H^q(X,L|_X^m)$, is sufficient to take the ``linear term'' of the integrand. Explicitly, as $m\to +\infty$ we have $H^q(X,L|_X^m) = 0$ for $q>0$ by ampleness of $L$, and we get the expansion
\[
\begin{array}{ccl}\label{eq::trace_shifed_lin}
w^0(X,L|_X^m) &=& m^{n-b+1} \int_M \frac{c^G_b(F) c^G_1(L)^{n-b+1}}{(n-b+1)!} + \\
&& + m^{n-b}  \int_M \frac{c^G_b(F) \left(c^G_1(M) - c^G_1(F)\right) c^G_1(L)^{n-b}}{2(n-b)!} 
+ O(m^{n-b-1}),
\end{array}
\]
where we used $td(M)=1+\frac{1}{2}c_1(M)+\dots$ and $td(F)^{-1} = 1-\frac{1}{2}c_1(F) + \dots$ (dots representing terms of degree greater then one). Finally we have to come back to original linearization of $E_j$'s. Since $F_i = E_i \otimes \mathbb C_{-\alpha_i}$, by the Cartan model of the equivariant cohomology of $M$, is easy to see that $c_1^G(F_j) = c_1^G(E_j) - k_j \alpha_j$ and $c_{k_j}^G(F_j) = \sum_{p=0}^{k_j} (-\alpha_j)^{k_j-p} c_p^G(E_j)$, whence
\[
\begin{array}{ccl}
c_k^G(F) &=& \prod_{j=1}^s c_{k_j}^G(F_j) = \prod_{j=1}^s \sum_{p=0}^{k_j} 
(-\alpha_j)^{k_j-p} c_p^G(E_j) \\
& = & c_b^G(B)
\left(1 - \sum_{j=1}^s \alpha_j \frac{c_{k_j-1}^G(E_j)}{c_{k_j}^G(E_j)} + \dots \right) \\
c_1^G(F) &=& \sum_{j=1}^s c_1^G(F_j) = c_1^G(E) - \sum_{j=1}^s k_j \alpha_j.
\end{array}
\]
and substituting in \eqref{eq::trace_shifed_lin} we are done.
\end{proof}


\section{Applications and examples}\label{sec::appex}

In this section we show some consequences of the Theorems \ref{thm::main_vect} and \ref{thm::main_lin}. In particular we use those theorems to calculate the Futaki invariant of central fibers of test configurations arising from degenerations of linear sections of vector bundles.

More precisely consider a $n$-dimensional polarized manifold $(M,L)$ endowed with a one-parameter subgroup of automorphisms $\rho:\mathbb C^\times \to {\rm Aut}(M)$ that linearizes on $L$. Let $P = {\rm span} (\eta_1, \dots, \eta_s) \subset H^0(M,E)$ be an $s$-dimensional linear system of a rank $k$ linearized holomorphic vector bundle $E$ on $M$. Thus $X_{P} = \bigcap_{j=1}^s \eta_j^{-1}(0)$ is $s$-codimensional. The $\rho$-action on $P$ gives naturally a test configuration for the variety $(X_P,L|_{X_P})$ as follows. Let $P_t=\rho(t) \cdot P$ and let $\mathcal X$ be the closure of $\left\{ (x,t) \in M\times \mathbb C^\times \,|\, x \in X_{P_t} \right\}$ in $ M \times \mathbb C$. The projection on the second factor induces a flat morphism $\pi : \mathcal X \to \mathbb C$. Let $X_{P_0} = \bigcap_{j=1}^s \sigma_j^{-1}(0)$, where $P_0 = {\rm span}(\sigma_1, \dots, \sigma_s) = \lim_{t \to 0} \rho(t) \cdot P$ with $\sigma_j$'s semi-invariant. By the uniqueness \cite[Proposition 9.8]{Har77} we have $\pi^{-1}(0) = X_{P_0}$.

\subsection{A Mukai-Umemura-Tian like example with singular central fibre} 

Consider the grassmannian $M=G(4,6)$ of $4$-planes in $\mathbb C^6$ polarized with $L= \bigwedge^2 Q$, being $Q$ the universal quotient bundle. Since the Kodaira map induced by $L$ is the Pl\"uker embedding $M \hookrightarrow \mathbb P^{14}$, for each $\eta_1, \eta_2, \eta_3 \in H^0(M,L)$ linearly independent, the subvariety $X = \bigcap_{j=1}^3 \eta_j^{-1}(0)$ is a section of $G(4,6)$ with a 3-codimensional subspace in $\mathbb P^{14}$. The general $X$ arising in this way is a Fano 5-fold. 

\noindent Let $\rho:\mathbb C^\times \to SL(6)$ be the one parameter subgroup generated by 
\newline ${\rm diag}(-5,-3,-1,1,3,5)$ and consider $P_\epsilon = {\rm span}\{\eta_1, \eta_2, \eta_3\} \subset H^0(M,L)$ where $$ \eta_1=e_{16}+e_{25}+e_{34},\quad \eta_2=e_{15}+e_{24}+\varepsilon e_{46},\quad \eta_3=e_{26}+e_{35}+\varepsilon e_{45} $$ and we idenfify $H^0(M,L)\simeq \bigwedge^2\mathbb C^6$. 

\noindent By local calculations it is easy to see that $X_{P_0}$ is $\mathbb C^\times$-invariant and is singular at points $e_2 \wedge e_3 \wedge e_5 \wedge e_6$ and $e_1 \wedge e_2 \wedge e_4 \wedge e_5$. On the other hand, for $\varepsilon \neq 0$ the variaty $X_{P_\varepsilon}$ is non-singular but not invariant.

\noindent Now let $$ \sigma_1=e_{16}+e_{25}+e_{34},\quad \sigma_2=e_{15}+e_{24},\quad \sigma_3=e_{26}+e_{35}. $$ We have $P_0 = {\rm span}\{\sigma_1,\sigma_2,\sigma_3\}$, moreover $\rho(t) \cdot P_\varepsilon$ tends to $P_0$ as $t \to 0$. Thus, following the construction shown at the start of this section, there is a test configuration of $(X_{P_\varepsilon},L|_{X_{P_\varepsilon}})$ with central fibre $(X_{P_0},L|_{X_{P_0}})$. Since 
$$ \rho(t) \cdot \sigma_1 = \sigma_1, \quad \rho(t) \cdot \sigma_2 = t^{-2} \sigma_2, \quad \rho(t) \cdot \sigma_3 = t^2 \sigma_3, $$ 
by the Corollary \ref{cor::eq_deg} we get 
$$F(X_{P_0},L|_{X_{P_0}}, \rho) = C\, (0-2+2) = 0, $$
 where we used  $F(M,L,\rho)=0$ and $a_0(M,L)=0$. 

Hence by \cite{T97} or \cite{D02} we proved the following 

\begin{prop}
For each $\varepsilon \neq 0$ the manifold $X_{P_\varepsilon}$ is not $K$-stable, hence is not K\"ahler-Einstein.  
\end{prop}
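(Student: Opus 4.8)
The plan is to show that for $\varepsilon\neq0$ the manifold $X_{P_\varepsilon}$ admits a test configuration with vanishing Futaki invariant that is \emph{not} a product configuration, contradicting the strict-inequality clause in the definition of $K$-stability. The test configuration is exactly the one constructed at the start of Section \ref{sec::appex}: since $\rho(t)\cdot P_\varepsilon\to P_0$ as $t\to0$, the degeneration of $X_{P_\varepsilon}$ along $\rho$ has central fibre $(X_{P_0},L|_{X_{P_0}})$. The decisive computation is that the Futaki invariant of the induced $\mathbb C^\times$-action on this central fibre vanishes, which I would read off directly from Corollary \ref{cor::eq_deg}.

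First I would record the weights. Since $L=\bigwedge^2 Q$ gives the Pl\"ucker embedding and we identify $H^0(M,L)\simeq\bigwedge^2\mathbb C^6$, the one-parameter subgroup generated by $\mathrm{diag}(-5,-3,-1,1,3,5)$ acts on $e_{ab}=e_a\wedge e_b$ with weight equal to the sum of the two diagonal entries. A direct check on $\sigma_1=e_{16}+e_{25}+e_{34}$, $\sigma_2=e_{15}+e_{24}$, $\sigma_3=e_{26}+e_{35}$ shows each is semi-invariant with the stated weights $\alpha_1=0$, $\alpha_2=-2$, $\alpha_3=2$ (each monomial inside a given $\sigma_j$ must carry the same weight, which is what makes $X_{P_0}$ invariant in the first place). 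Here all three sections live in the same power $L^r$ with $r=1$, so Corollary \ref{cor::eq_deg} applies with $r_j=r=1$ for all $j$.

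Next I would invoke Corollary \ref{cor::eq_deg}, which gives
\[
F(X_{P_0},L|_{X_{P_0}},\rho)=F(M,L,\rho)-C\,\mu(X_{P_0},M,L),
\qquad
\mu=\frac{1}{n+1-s}\sum_{j=1}^s\Bigl(\frac{\alpha_j}{r}-\frac{a_0}{d_0}\Bigr).
\]
Because $M=G(4,6)$ is a homogeneous space, it carries a K\"ahler--Einstein metric, so $F(M,L,\rho)=0$; equivalently, choosing the $SL(6)$-induced linearization on $L$ gives $w(M,L^m)=0$ identically and hence $a_0(M,L)=0$. With $a_0/d_0=0$ the Chow weight reduces to $\mu=\tfrac{1}{n+1-s}\sum_j\alpha_j/r=\tfrac{1}{n+1-s}(0-2+2)=0$, whence $F(X_{P_0},L|_{X_{P_0}},\rho)=0$. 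Since $\varepsilon\neq0$ forces $X_{P_\varepsilon}$ to be smooth while the central fibre $X_{P_0}$ is singular at $e_2\wedge e_3\wedge e_5\wedge e_6$ and $e_1\wedge e_2\wedge e_4\wedge e_5$, the general fibre is not isomorphic to the central fibre, so this is genuinely not a product configuration. A test configuration with $F=0$ that is not a product therefore violates $K$-stability, and the non-existence of a K\"ahler--Einstein metric follows from the Ding--Tian/Donaldson obstruction (as in \cite{T97,D02}), since such a metric would force $K$-stability.

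The main obstacle I anticipate is not the numerical evaluation but the two structural inputs that make the argument non-vacuous. The first is verifying that the degeneration really is a legitimate test configuration with central fibre $X_{P_0}$ and not a product: this rests on the flatness and the uniqueness of the limit (Proposition 9.8 of \cite{Har77}) established in the general construction, together with the singularity analysis showing $X_{P_0}\not\cong X_{P_\varepsilon}$. The second, more delicate point is justifying that $F=0$ on a \emph{non-product} configuration is enough to rule out a canonical metric; one must appeal to the refined form of the obstruction (strict positivity of the Futaki invariant on non-product test configurations for a $K$-polystable manifold) rather than to the naive statement that $F\geq0$. The local computation confirming singularity of $X_{P_0}$ at the two indicated points, which I would carry out in affine Pl\"ucker coordinates, is routine but essential to the "not a product" conclusion.
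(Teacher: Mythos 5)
Your proposal is correct and follows essentially the same route as the paper: the same test configuration degenerating $X_{P_\varepsilon}$ to $X_{P_0}$, the same weights $(0,-2,2)$, and the same application of Corollary \ref{cor::eq_deg} with $F(M,L,\rho)=0$ and $a_0(M,L)=0$ to get vanishing Futaki invariant on a non-product configuration. Your added emphasis on why the configuration is not a product (smooth general fibre versus singular central fibre) and on the precise form of the K\"ahler--Einstein obstruction only makes explicit what the paper leaves implicit.
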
 

\subsection{The quintic Del Pezzo threefold}\label{subsec::X_5} 

Consider the Grassmannian $M=G(2,5)$ of planes in $\mathbb C^5$ polarized with 
$L=\bigwedge^3 Q$, where $Q$ is the universal quotient bundle. As well known the Kodaira map induced by $L$ is the Pl\"uker embedding $ M \hookrightarrow \mathbb P^9 $. Thus for each $\sigma_1, \sigma_2, \sigma_3 \in H^0(M,L)$ linearly independent, the subvariety $X = \bigcap_{j=1}^3 \sigma_j^{-1}(0)$ is a section of $G(2,5)$ with a 3-codimensional subspace in $\mathbb P^9$. The general $X$ arising in this is the quintic Del Pezzo threefold \cite{IP}, in particular it is Fano.

\begin{prop}\label{prop::stabX5}
Each degeneration of $X$ induced by a one-parameter subgroup $\rho : \mathbb C^\times \to {\rm Aut}(M)$ has non-negative Futaki invariant.
\end{prop}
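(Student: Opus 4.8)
The plan is to feed this degeneration into Corollary \ref{cor::eq_deg} and then reduce the whole statement to a semistability assertion about the generic linear section, which I read off from the classical geometry of the quintic del Pezzo threefold. First I would record the numerical data: here $M=G(2,5)$ has $\dim M=n=6$, the polarization $L=\bigwedge^3 Q=\mathcal O_M(1)$ is the Pl\"ucker hyperplane bundle with $H^0(M,L)\cong\bigwedge^2(\mathbb C^5)^\ast$ of dimension $10$, and $X=X_P$ is cut by $s=3$ sections of $L$, so $r=1$. A degeneration induced by $\rho$ is exactly a test configuration of the type built in Section \ref{sec::appex}, with central fibre $X_{P_0}$, where $P_0=\lim_{t\to0}\rho(t)\cdot P$. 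Since $\mathrm{Aut}(M)^\circ=PGL(5)$ is semisimple, lifting $\rho$ to $SL(5)$ and taking the induced linearization on $L$ forces the weight on $\bigwedge^{\mathrm{top}}H^0(M,L^m)$ to vanish identically (a semisimple group has no nontrivial characters), so $a_0(M,L)=a_1(M,L)=0$ and hence $F(M,L,\rho)=0$. Corollary \ref{cor::eq_deg} with $r=1$ then reduces the claim to showing that the Chow weight satisfies $\mu(X_{P_0},M,L)\le 0$, because $C>0$ (as $M\not\simeq\mathbb P^6$).

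Next I would turn the Chow weight into a Hilbert--Mumford weight. With the $SL(5)$-linearization just chosen, $a_0(M,L)=0$, so the formula for $\mu$ from Definition \ref{defn::Cw} and Corollary \ref{cor::eq_deg} collapses to
\[
\mu(X_{P_0},M,L)=\frac{1}{n+1-s}\sum_{j=1}^s\alpha_j=\frac14\sum_{j=1}^3\alpha_j .
\]
Now $\sum_{j=1}^3\alpha_j$ is the $\rho$-weight of the line $\bigwedge^3 P_0\subset\bigwedge^3 H^0(M,L)$, and since $P_0=\lim_{t\to0}\rho(t)\cdot P$ this line is precisely the lowest-weight component of $\bigwedge^3 P$; thus $\sum_j\alpha_j=w_{\min}$, the minimal $\rho$-weight occurring in $\bigwedge^3 P$. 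By the Hilbert--Mumford numerical criterion, $w_{\min}\le 0$ for every one-parameter subgroup $\rho$ precisely when the Pl\"ucker point $[P]\in G\big(3,H^0(M,L)\big)=G(3,10)$ is $SL(5)$-semistable. So the proposition reduces to semistability of the section $[P]$.

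I would then establish this semistability from the classical fact that the smooth $X_P$ is the quintic del Pezzo threefold $V_5$, which is unique up to isomorphism, is $PGL(5)$-rigid as a linear section of $G(2,5)$, and has reductive automorphism group $\mathrm{Aut}(V_5)\cong PGL(2)$ of dimension $3$ (see \cite{IP}). Consequently the planes $P$ with smooth $X_P$ form a single $PGL(5)$-orbit, open and dense in $G(3,10)$ since $\dim PGL(5)-\dim\mathrm{Aut}(V_5)=24-3=21=\dim G(3,10)$. Passing to the affine cone over $G(3,10)$, which has dimension $22$, the $SL(5)$-orbit of a generic $\bigwedge^3 P$ has dimension $24-3=21$ (its stabilizer, the image of $\mathrm{Aut}(V_5)$, is semisimple of dimension $3$, hence fixes the vector and not merely its line). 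Thus generic orbits have codimension one in the cone and are separated by a nonconstant homogeneous $SL(5)$-invariant, which does not vanish at $\bigwedge^3 P$. This gives $SL(5)$-semistability of $[P]$, hence $w_{\min}\le 0$, and therefore $F(X_{P_0},L|_{X_{P_0}},\rho)\ge 0$ for every $\rho$.

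The main obstacle is this last paragraph: turning the abstract uniqueness of $V_5$ into semistability of the Pl\"ucker point. Two points need care. One is the reconstruction statement that any isomorphism between smooth linear sections is induced by $PGL(5)=\mathrm{Aut}(G(2,5))$, so that ``unique up to isomorphism'' really yields a single open dense orbit; this is classical but not formal. The other is ensuring the semistable locus is nonempty --- a priori the open orbit could consist of uniformly unstable points --- which is exactly what the dimension count $21<22$ secures by producing a separating invariant. If one wished to avoid invariant theory, the same inequality $w_{\min}\le 0$ could instead be verified by conjugating $\rho$ into the diagonal torus of $SL(5)$ and checking, through a convexity argument on the weight polytope of $\bigwedge^3 P$ for generic $P$, that $0$ always lies in the weight range; this is a more computational route to the identical conclusion.
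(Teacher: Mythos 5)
Your opening reduction coincides with the paper's: both pass through Corollary \ref{cor::eq_deg}, use that $\rho$ lifts to $SL(5)$ so that $F(M,L,\rho)=0$ and $a_0(M,L)=0$, and thereby reduce the proposition to the inequality $\sum_{j=1}^3\alpha_j\le 0$ for the weights of the limiting plane $P_0$. From that point on you diverge. The paper finishes by a direct combinatorial computation: it diagonalizes $\rho$ with ordered weights $\nu_1\le\dots\le\nu_5$, identifies the limit $P_0$ as a span of minimal-weight monomials $e_{123},e_{124},\min\{e_{134},e_{125}\}$ for $P$ generic relative to that eigenbasis, and checks $\alpha_1+\alpha_2+\alpha_3\le 0$ in the two resulting cases (using $\sum\nu_i=0$). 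You instead recast $\sum\alpha_j$ as the minimal $\rho$-weight of $\bigwedge^3P$ and invoke the Hilbert--Mumford criterion, so that the proposition becomes $SL(5)$-semistability of the Pl\"ucker point $[P]\in G(3,10)$; this is a genuinely different, more conceptual route, and if completed it would prove the statement uniformly for every smooth $X_P$ rather than for $P$ generic with respect to each fixed $\rho$.

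However, your justification of semistability has a real gap. The inference ``generic $SL(5)$-orbits have codimension one in the $22$-dimensional affine cone, hence are separated by a nonconstant homogeneous invariant not vanishing at $\bigwedge^3P$'' is not formal: a group can act on an affine variety with generic orbits of positive codimension while every orbit closure contains the vertex, in which case there are no nonconstant polynomial invariants at all (invariant \emph{rational} functions, which Rosenlicht's theorem does provide, need not descend to polynomial invariants). To close this you need at least two further inputs: that $SL(5)$ has no nontrivial characters, and that the homogeneous coordinate ring of $G(3,10)$ is factorial, so that a nonconstant invariant rational function $p/q$ on the cone can be written with $p,q$ honest homogeneous invariants, at least one nonconstant and both nonvanishing at the generic $\bigwedge^3P$ (which requires the additional observation that the orbits $SL(5)\cdot(\lambda v_0)$ are generically distinct as $\lambda$ varies, so the invariant function is nonconstant along the line). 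Similarly, the ``single open $PGL(5)$-orbit'' step silently uses a reconstruction theorem (every abstract isomorphism of smooth codimension-$3$ linear sections is induced by an automorphism of $G(2,5)$), which you flag but do not supply. None of this is wrong, and the strategy can be carried through, but as written the decisive step is asserted rather than proved; the paper's explicit weight computation, while less illuminating, bypasses all of these issues.
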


\begin{proof}
Consider the isomorphism $H^0(M,L)\simeq \mathbb \bigwedge^3 \mathbb C^5$ given by $ \bigwedge^3 \mathbb C^5 \ni v \mapsto \sigma_v \in H^0(M,L) $ where $$ \sigma_v(E) = v + \bigwedge^2 \mathbb C^5 \wedge E \in \bigwedge^3 \left(\mathbb C^5 /E \right), $$ for all $E \in M$. Thus we can identify $\sigma_j$ with $u_j \in \bigwedge^3 \mathbb C^5$.

We recall that each automorphism of $M$ comes from the action of an element of $SL(5)$ on $\mathbb C^5$. Thus we can consider $\rho : \mathbb C^\times \to SL(5)$. Let $(e_1,\dots,e_5)$ a basis of eigenvectors and let $\nu_1,\dots,\nu_5 \in \mathbb Z$ be the weights of $\rho$. We have $\nu_1 + \dots + \nu_5 = 0$ and we can suppose without loss $\nu_1 \leq \dots \leq \nu_5$.

Now, since $u_j$'s are general we can also suppose
\begin{eqnarray*}
u_1 &=& \sum_{1\leq i<j<k \leq 5} {c_1^{ijk} e_{ijk}} \\
u_2 &=& \sum_{1\leq i<j<k \leq 5,\, i+j+k \geq 7} {c_2^{ijk} e_{ijk}} \\
u_3 &=& \sum_{1\leq i<j<k \leq 5,\, i+j+k \geq 8} {c_3^{ijk} e_{ijk}}
\end{eqnarray*}
and $c_\ell^{ijk} \neq 0$.

The action induced by $\rho$ on $\bigwedge^3 \mathbb C^5$ gives a weak order ($\preceq$) on the basis $(e_{ijk}\,|\,1 \leq i<j<k \leq 5)$ as follows: we define $e_{i_1j_1k_1} \preceq e_{i_2,j_2,k_2}$ if $\nu_{i_1}+\nu_{j_1}+\nu_{k_1} \leq \nu_{i_2}+\nu_{j_2}+\nu_{k_2}$. Obviously $e_{123} \preceq e_{124} \preceq e_{134} \preceq e_{234}$ and $e_{125} \preceq e_{ij5}$ for all $i<j$. Thus ${\rm span}(u_1, u_2, u_3)$ tends to ${\rm span}(v_1,v_2,v_3)$ under the action of $\rho(t)$ as $t \to 0$, where \begin{eqnarray*}
v_1 &=& \min \{ e_{123} , e_{125}\} = e_{123} \\
v_2 &=& \min \{ e_{124} , e_{125}\} = e_{124} \\
v_3 &=& \min \{ e_{134} , e_{125}\}
\end{eqnarray*}
Let $\alpha_j$ the weight of $v_j$. We have: $\alpha_1=\nu_1 + \nu_2 + \nu_3$, $\alpha_2 = \nu_1 + \nu_2 + \nu_4$ and $\alpha_3= \min \{\nu_1+\nu_3+\nu_4, \nu_1 + \nu_2 + \nu_5\}$. In both cases that can occur is easy to check that $$\alpha_1 + \alpha_2 + \alpha_3 \leq 0.$$

Finally let $X_0 = \lim_{t \to 0} \rho(t) \cdot X = \bigcap_{j=1}^3 \sigma_{v_j}^{-1}(0)$. 
By corollary \ref{cor::eq_deg} we get 
$$ F(X_0,L|_{X_0}, \rho) = -\frac{1}{4} (\alpha_1 + \alpha_2 + \alpha_3) \geq 0,$$ 
where we used that $F(M,L, \rho)=0$, $a_0(M,L)=0$ and $\frac{2\,d_1}{n\,d_0} = 5$ 
(the latter follows from the general fact that if $L^q=K_M^{-1}$ then 
$\frac{2\,d_1(M,L)}{n\,d_0(M,L)}=q$). 
\end{proof}

The result above is an evidence to the $K$-stability of the quintic Del Pezzo threefold $X_5$. In this specific case the above discussion can be strenghtned by observing: 
\begin{itemize}
\item The complex structure of $X = X_5$ is rigid \cite[Corollary 3.4.2]{IP}, hence it cannot be used as central fiber of a test configuration. 
\item It is not hard to adapt Donaldson proof of the existence of K\"ahler-Einstein metric on the Mukai-Umemura manifold $X_{22}$ to this case, hence proving that $X$ is indeed K\"ahler-Einstein and so 
$K$-stable. As showed in \cite{MU}, the manifolds $X_{22}$ and $X_5$ share all the properties involved 
in his argument. In particular we observe that $X_5$ has a $PSL(2)$-invariant anti-canonical section with at worst cusp-like singularities.  
\end{itemize}

\subsection{General complete intersections in Grassmannians}

Following a construction given by Tian \cite{T1}, we generalize Proposition \ref{prop::stabX5} to general intersections of some exterior power of the universal quotient bundle on the Grasmannian. 

As will be clear from the proof the generality condition depends on the one-parameter subgroup $\rho$.

\begin{prop}
Let $(M,L)=(G(k,N),K_{G(k,N)}^{-1})$ be the grassmannian of $k$-planes in $\mathbb C^N$ anti-canonically polarized. Suppose $k(N-k)>N-1$ to avoid trivialities. Fix a one-parameter subgroup 
$\rho: \mathbb C^\times \to {\rm Aut}(M)$ and a linearization to $L$. Denoted by $Q$ the universal 
quotient bundle on $M$, let $E=\bigwedge^{\ell} Q$ endowed with a linearization on $E$ such that 
$(\det E)^N \simeq L^{\binom{N-k}{\ell}}$ as linearized bundles. Let $P \subset H^0(M,E)$ be a 
general $d$-dimensional subspace such that $X_P=\bigcap_{\sigma \in P} \sigma^{-1}(0)$ has dimension $k(N-k)-d\binom{N-k}{\ell} > 0$.

$X_P$ is Fano if and only if $ N-d\binom{N-k}{\ell}>0$. In this case we have 
$$F(X_{P_0},L|_{X_{P_0}}, \rho) > 0,$$ where $P_0=\lim_{t\to 0} \rho(t) \cdot P$.
\end{prop}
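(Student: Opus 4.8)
The plan is to realize $X_{P_0}$ as the central fibre of the test configuration built at the start of this section and to compute its Futaki invariant through Corollary \ref{cor::Futaki_peso}, with $E=\bigwedge^{\ell}Q$ in the role of the bundle (so the rank is $\kappa=\binom{N-k}{\ell}=\mathrm{rank}(E)$, the number of sections is $d=\dim P$, and $n=k(N-k)=\dim M$). First I would record the numerical data fixed by the hypotheses: $(\det E)^q\simeq L^p$ with $q=N$ and $p=\binom{N-k}{\ell}$, so that $X_{P_0}$, cut out by $d$ sections of $E$, has the expected dimension $n-d\kappa>0$ for generic $P$ and, by adjunction as in Theorem \ref{thm::main_vect_intro}, is Fano exactly when $q-pd=N-d\binom{N-k}{\ell}>0$; this is the first assertion. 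Since every automorphism of $M$ comes from $SL(N)$, I would equip $E$ and $L$ with their canonical $SL(N)$-linearizations.

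Second, I would verify the normalization hypothesis $\int_M c_\kappa^G(E)^d c_1^G(E)^{n-d\kappa+1}=0$ of Corollary \ref{cor::Futaki_peso}. The point is that, after integration over $M$, this equivariant integral is homogeneous of degree one in the weight $(\nu_1,\dots,\nu_N)$ of $\rho$: the integrand has total weighted degree $n+1$, the fundamental class absorbs cohomological degree $n$, and exactly one unit of weight survives. As $E$ and $L$ are $GL(N)$-homogeneous, this linear functional is invariant under the Weyl group $S_N$ permuting the $\nu_i$; a symmetric linear functional of $(\nu_i)$ is a multiple of $\sum_i\nu_i$, which vanishes because $\rho\subset SL(N)$. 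Hence the normalization holds and Corollary \ref{cor::Futaki_peso} gives $F(X_{P_0},L|_{X_{P_0}},\rho)=-CT\sum_{j=1}^d\alpha_j$ with $C>0$, the $\alpha_j$ being the weights of the semi-invariant sections spanning $P_0$.

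Third, I would establish $T>0$. Since $Q$ is globally generated, so is $E=\bigwedge^{\ell}Q$, and one checks it is in fact very ample; Beltrametti--Schneider--Sommese (Remark \ref{rem::positivity_T}) then yields $T\geq \kappa^{\,n-d\kappa+1}\bigl(p(n+1)-\kappa q\bigr)=\kappa^{\,n-d\kappa+1}\,\kappa\,\bigl(k(N-k)+1-N\bigr)$, which is strictly positive precisely under the standing hypothesis $k(N-k)>N-1$. At this point only the sign of $\sum_j\alpha_j$ remains.

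Finally, and this is the heart of the matter, I would run Tian's degeneration analysis to prove $\sum_{j=1}^d\alpha_j<0$, exactly as in Proposition \ref{prop::stabX5} but with $\bigwedge^{\ell}$ replacing $\bigwedge^{3}$ on $G(2,5)$. Writing $\rho$ through $SL(N)$ with ordered weights $\nu_1\le\cdots\le\nu_N$, $\sum_i\nu_i=0$, the space $H^0(M,E)\simeq\bigwedge^{\ell}\mathbb C^N$ carries the weight basis $e_I=e_{i_1}\wedge\cdots\wedge e_{i_\ell}$ of weight $\nu_I=\sum_a\nu_{i_a}$, and for generic $P$ the limit $P_0=\lim_{t\to 0}\rho(t)\cdot P$ is spanned by the basis vectors realizing the $d$ smallest weights, selected greedily through the weak order $\preceq$ and resolving ties as in the $G(2,5)$ computation. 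Thus $\sum_j\alpha_j$ equals the sum of the $d$ smallest weights $\nu_I$; as these weights total zero over all $I$ and $d<\binom{N}{\ell}$, the average of the $d$ smallest lies strictly below the global average $0$ whenever $\rho$ is non-trivial, whence $\sum_j\alpha_j<0$ and $F(X_{P_0},L|_{X_{P_0}},\rho)>0$. The main obstacle is precisely this last step: making \emph{generic $P$} explicit relative to $\rho$ (so that no accidental vanishing of coefficients raises the pivots), and proving that the greedy, weak-order choice of $P_0$ really computes the minimal attainable weight sum in full generality of $\ell,k,N,d$ --- the verification carried out by hand in the two low-dimensional cases above.
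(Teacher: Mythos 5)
Your proposal follows the same route as the paper: adjunction for the Fano criterion, vanishing of $\int_M c^G_{\binom{N-k}{\ell}}(E)^d\, c_1^G(E)^{k(N-k)-d\binom{N-k}{\ell}+1}$ so that Corollary \ref{cor::Futaki_peso} applies, positivity of $T$ via Beltrametti--Schneider--Sommese, and negativity of $\sum_j\alpha_j$ from the degeneration of a generic $P$ onto the span of the $d$ lowest-weight semi-invariant sections combined with $\sum_{j=1}^{h^0(E)}\alpha_j=0$ (which holds because $\rho$ acts on the irreducible $SL(N)$-module $H^0(M,E)$ through $SL(h^0(E))$). Your Weyl-invariance argument for the vanishing (the integral is linear in the weights and $S_N$-symmetric, hence a multiple of $\sum_i\nu_i=0$) is a clean alternative to the paper's appeal to localization. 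Your closing worry about the ``greedy'' choice is also unnecessary: one does not need to redo the tie-breaking analysis of Proposition \ref{prop::stabX5}; it suffices to order a weight basis $\sigma_1,\dots,\sigma_{h^0(E)}$ of $H^0(M,E)$ by increasing weight and put a basis of the generic $P$ in echelon form with respect to it (genericity being taken relative to $\rho$, as the paper states before the proposition), so that $P_0={\rm span}(\sigma_1,\dots,\sigma_d)$ and $\sum_{j=1}^d\alpha_j<0$ follows from the ordering and non-triviality of $\rho$.

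The one genuine error is the claim that $E=\bigwedge^\ell Q$ is very ample. In general it is not: on a line of $G(k,N)$ the universal quotient bundle restricts to $\mathcal O(1)\oplus\mathcal O^{N-k-1}$, so $\bigwedge^\ell Q$ acquires trivial direct summands whenever $\ell<N-k$ and is not even ample. The paper points this out explicitly and instead invokes \cite[Proposition 1]{BSS96} directly, whose hypotheses are weaker (the bundle being globally generated is what is used here) and do cover $\bigwedge^\ell Q$; with that substitution your bound $T\geq\binom{N-k}{\ell}^{k(N-k)-d\binom{N-k}{\ell}+2}\left(k(N-k)+1-N\right)>0$ goes through under the standing hypothesis $k(N-k)>N-1$. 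So the architecture of your argument is correct, but the justification of $T>0$ must be repaired by replacing the very-ampleness claim with the appropriate case of the Beltrametti--Schneider--Sommese inequality.
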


\begin{proof}
Take on $E$ and $L$ the unique linearizations induced by $SL(N)$.
Consider the induced representation of $\rho$ on $H^0(M,E)$ and fix a basis of semi-invariant sections $\sigma_1,\dots,\sigma_{h^0(E)}$. Thus for each $j\in \{1,\dots,h^0(E)\}$ there is a unique $\alpha_j\in \mathbb Z$ such that $t \cdot \sigma_j = t^{\alpha_j} \sigma_j$. We can suppose without loss
\begin{equation}\label{eq::ord_basis}
\alpha_i \leq \alpha_j \quad \mbox{if} \quad i <j.
\end{equation}
Let $\eta_1,\dots,\eta_d$ be a basis of $P$. Since $P$ is general we can suppose
\begin{eqnarray*}
\eta_1 &=& \sum_{j=1}^{h^0(E)} c_{1j}\sigma_j\\
\eta_2 &=& \sum_{j=2}^{h^0(E)} c_{2j}\sigma_j\\
       &\vdots&  \\
\eta_d &=& \sum_{j=d}^{h^0(E)} c_{dj}\sigma_j,
\end{eqnarray*}
where $c_{ii}\neq 0$ for all $i\in \{1,\dots,d \}$. Thus the limit of $P$ under the action of $\rho$ is the plane $P_0 = {\rm span}(\sigma_1, \dots , \sigma_d)$. In the chosen linearization $\rho$ acts on $H^0(M,E)$ as a subgroup of $SL(h^0(E))$, thus $\sum_{j=1}^{h^0(E)} \alpha_j = 0$. Hence, by \eqref{eq::ord_basis} and non-triviality of $\rho$ we have \begin{equation}\label{eq::neg_weight}\alpha(P) = \sum_{j=1}^d \alpha_j < 0.\end{equation}

Since $P$ is general, $X_P$ is smooth. Moreover, by the adjunction formula and the hypothesis on $E$ we get $$c_1(X_P)=\iota^*c_1(M)-d\iota^*c_1(E) = \left( 1 - \frac{d}{N} \binom{N-k}{\ell} \right) \iota^*c_1(M),$$ where $\iota : X \hookrightarrow M$ is the inclusion. This prove the Fano condition. 

By the localization theorem for equivariant cohomology is not hard to see that 
$$\int_{G(k,N)} c^G_{\binom{N-k}{\ell}}\left( \bigwedge^\ell Q \right)^d c^G_1
\left( \bigwedge^\ell Q \right)^{k(N-k)-d\binom{N-k}{\ell}+1} = 0.$$ 
Hence, by the Corollary \ref{cor::Futaki_peso} we get 
$$ F(X_{P_0},L|_{X_{P_0}}, \rho) = -CT \sum_{j=1}^d \alpha_j, $$ 
where $C>0$ and 
$$ T > \left(k(N-k)+1-N\right) \binom{N-k}{\ell}^{k(N-k)-d \binom{N-k}{\ell}+2} > 0.$$ 
Actually $\bigwedge^\ell Q$ is not very ample, however in this case we can 
apply \cite[Proposition 1]{BSS96} to get the first inequality above. 
\end{proof}

\end{document}